\documentclass{aptpub}
\usepackage[T1]{fontenc}
\usepackage[latin1]{inputenc}
\usepackage{ucs}
\usepackage[english]{babel}
\usepackage{multicol}
\usepackage{amssymb}
\usepackage{amsmath}
\usepackage{float}
\usepackage{theorem}
\usepackage{graphicx, epsfig}
\usepackage{verbatim}
\usepackage{bbm}

\usepackage[official,right]{eurosym}

\oddsidemargin -0cm \evensidemargin -0cm 
\topmargin -2cm
\textheight 22.0cm
\textwidth 16.5cm

\hfuzz5pt 
 





\newcommand{\id}[1]{\ensuremath{\mathbbm{1}_{#1}}}
\newcommand{\bc}{\begin{center}}
\newcommand{\ec}{\end{center}}
\newcommand{\1}{\mathbf{1}}

\newcommand{\Esp}{\mathbb{E}}

\renewcommand{\P}{\mathbb{P}}

\newcommand{\R}{\mathbb{R}}

\newcommand{\Nat}{\mathbb{N}}

\newcommand{\cC}{\mathcal{C}}

\newcommand{\cN}{\mathcal{N}}

\newcommand{\Z}{\mathbb{Z}}
\newcommand{\hZ}{\widehat{\mathbb{Z}}}

\newcommand{\Wsdn}{\mathbb{W}^{SD,n}}

\newcommand{\W}{\mathbb{W}}

\newcommand{\pare}[1]{\left ( #1 \right )}

\newcommand{\sgn}{{\rm sgn}}

\def \sign{{\rm Sgn}}

\def \defi{:=}

\authornames{P. \'Etoré and M. Martinez} 
\shorttitle{Exact simulation for SDE with discontinuous drift} 

\begin{document}

\title{Exact simulation for solutions of one-dimensional Stochastic Differential Equations with discontinuous drift}

\authorone[ENSIMAG - Laboratoire Jean Kuntzmann ]{Pierre \'Etoré}
\addressone{Tour IRMA 51, rue des Math\'ematiques, 38041 Grenoble Cedex 9, France. email: pierre.etore@imag.fr, Phone: + 33 (0)4 76 51 45 57 }

\authortwo[Universit\'e Paris-Est Marne-La-Vallée, Laboratoire d'Analyse et de Math\'ematiques Appliqu\'ees, UMR $8050$]{Miguel Martinez}
\addresstwo{
5  Bld Descartes, Champs-sur-Marne, 77454 Marne-la-Vall\'ee Cedex 2, France. email: miguel.martinez@univ-mlv.fr 
}

\begin{abstract}
In this note we propose an exact simulation algorithm for the solution of
\begin{equation}
\label{eds-intro}
dX_t=dW_t+\bar{b}(X_t)dt,\quad X_0=x,
\end{equation}
where $\bar{b}$ is a smooth real function except at point $0$ where $\bar{b}(0+)\neq \bar{b}(0-)$. The main idea is to sample an exact skeleton of $X$ using an algorithm deduced from the convergence of the solutions of the skew perturbed equation
 \begin{equation}
\label{edsbeta}
dX^\beta_t=dW_t+\bar{b}(X^\beta_t)dt + \beta dL^0_t(X^\beta),\quad X_0=x
\end{equation}
towards $X$ solution of \eqref{eds-intro} as $\beta \neq 0$ tends to $0$. 

In this note, we show that this convergence induces the convergence of exact simulation algorithms proposed by the authors in \cite{etoremartinez1} for the solutions of \eqref{edsbeta} towards a limit algorithm. Thanks to stability properties of the rejection procedures involved as $\beta$ tends to $0$, we prove that this limit algorithm is an exact simulation algorithm for the solution of the limit equation \eqref{eds-intro}. Numerical examples are shown to illustrate the performance of this exact simulation algorithm.


\end{abstract}

\keywords{
Exact simulation methods ;  Brownian motion with Two-Valued Drift ; One-dimensional diffusion ; Skew Brownian motion ; Local Time.
}

\ams{65C05,65U20}{65C30,65C20}

\section{Introduction}
\label{sec:intro}
\subsection{Motivations and exposition of the problem}
Exact simulation methods for trajectories of one-dimensional SDEs has been a subject of much interest in the last years : see for example \cite{beskos1}, \cite{beskos2}, \cite{beskos5}, \cite{tanre1}, \cite{sbai}. Unlike the classical simulation methods,
which all involve some kind of discretization error (see for example \cite{ball:tala:96:1} for the Euler Scheme), the exact simulation methods are constructed in such a way that they do not present any discretization error (under the strong hypothesis that the diffusion coefficient is constant and equal to one). In the last years, the original method presented in the
fundamental article \cite{beskos2} has been extended to overcome various limitations of the initial algorithm ; it has been 
generalized to include the cases of unbounded drifts (\cite{beskos5}, \cite{beskos4}) and extended to various 'non classical' type of SDE (\cite{etoremartinez1}).

In this paper, we present an attempt for the adaptation of the exact simulation methods of \cite{beskos2} to one-dimensional SDEs that possess a discontinuous drift at point $0$. Namely, our object of study is $(X_t)_{t\geq 0}$ solution of 
\begin{equation}
\label{SDE-intro}
dX_t=W_t+\bar{b}(X_t)dt,\hspace{1,0 cm}X_0=x,
\end{equation}
where $\bar{b}$ is a smooth real function except possibly at point $0$ where $\bar{b}(0+)\neq \bar{b}(0-)$.

The simplest case of a process solution of an equation of type \eqref{SDE-intro} is surely the so-called 'Brownian motion with two valued drift' solution of
\begin{equation}
\label{Bang-Bang-intro}
dX_t=W_t+\pare{\theta_0\id{X_t>0} +\theta_1\id{X_t<0}} dt,\hspace{1,0 cm}X_0=x,
\end{equation}
where $\pare{\theta_0,\theta_1}\in \R^2$.
For a general reference concerning these types of motions, we refer to \cite{kara} p.440-441 or \cite{kara-2}. These motions appear in stochastic control problems (see for example \cite{Benes}, \cite{kara-2}) and also theoretical studies concerning representations of reflected Brownian motion with drift (see \cite{Shiryaev} in the case $\theta_0=-\theta_1$). Even though there exist explicit representation formulae for the densities of such Brownian motions with two valued drift in terms of combination of convolution integrals (see \cite{kara} p.440-441), up to our knowledge there is no exact numerical simulation algorithm for such motions available in the literature. The algorithm presented in this paper gives an answer to this question.

\subsection{Main ideas of the paper}
In \cite{etoremartinez1}, the authors manage to adapt the exact simulation methods of \cite{beskos2} to the case of one-dimensional SDEs that possess an additional term involving the local time of the unknown process at point $0$. Namely, the exact simulation methods of \cite{beskos2} are modified in \cite{etoremartinez1} to include the case where $(X^\beta_t)_{t\geq 0}$ is the solution of 
\begin{equation}
\label{SDE-beta-intro}
dX^\beta_t =W_t+\bar{b}(X^\beta_t)dt+\beta dL^0_t(X^\beta),\hspace{1,0 cm}X_0=x.
\end{equation}
In this situation $0\neq |\beta|<1$, $L^0_t(X)$ denotes the symmetric local time of $X^ \beta$ in zero at time $t$, and $\bar{b}$ is still allowed to be discontinuous at $0$.

The main idea in \cite{etoremartinez1} was to propose an exact rejection simulation algorithm for the solutions of \eqref{SDE-beta-intro} using as sampling reference measure the law of some drifted skew Brownian motion with prescribed terminal distribution and with drift of magnitude $1/\beta$ avoiding the case where $\beta=0$, for which we propose a proper treatment here. Our contribution in \cite{etoremartinez1} deals mainly on the simulation of  bridges of such drifted skew Brownian motions using a classical rejection procedure and looking for tractable rejection functions. 

Unfortunately, a direct exact simulation method along the same lines as \cite{etoremartinez1} cannot be properly defined in the case where $\beta=0$. However, we know from Le Gall in \cite{legall} that $X^\beta$ solution of \eqref{SDE-beta-intro} tends strongly to $X$ the solution of \eqref{SDE-intro} as $\beta$ tends to $0$ on each time interval $[0,T]$. This leads us to examine what happens at the level of the algorithms proposed in \cite{etoremartinez1} as $\beta$ tends to $0$.


In fact, we check here by computations that there is indeed a convergence phenomenon at the level of rejection functions and rejection sets involved in the exact simulation algorithms given in \cite{etoremartinez1}. This convergence gives rise naturally to a nice and implementable limiting algorithm. 

The main problem becomes then to prove rigorously that this limiting algorithm is indeed an exact simulation algorithm for the solution of \eqref{SDE-intro}. In particular, as far as we see, the direct interpretation of this limiting algorithm is not clear ; for the time being, we have to confess that we really understand the construction of the limiting algorithm exposed in this paper only {\it via} the convergence procedure explained above. Let us also emphasize that this new algorithm is still a rejection algorithm, and one may naturally ask for a direct interpretation of its corresponding reference measure. In Remark \ref{remarque-importante} we give an interpretation of the reference measure (corresponding to the limit rejection algorithm) in terms of a standard Brownian motion conditioned on prescribed laws for its final position and its local time at $0$ at time horizon $T$.

\subsection{Outline of the paper} 
The paper is organized as follows. In the preliminary Section 2, we explain the convergence of rejection sampling algorithms in a general framework. The result exposed in this section will be used to justify that our limiting algorithm is indeed an exact simulation algorithm for the solution of \eqref{SDE-intro}. The exact simulation problem treated here is presented in Section 3, where we explain the manner in which we adapt the exact simulation methods of \cite{beskos2} to our situation. Yet, the resulting algorithm adapted from \cite{beskos2} is not directly implementable in our context because we have to sample from a complicated reference probability measure $\hZ$. The sections 4 and 5 are devoted to the interpretation of $\hZ$ as a limit of some sequence $\pare{\hZ_n}$ of better known probability measures. Finally in Section 6, we apply the results of the preliminary Section 2 to the sequence $\pare{\hZ_n}$. This gives rise to a directly implementable limit algorithm 
 for the exact simulation of a skeleton along the reference probability $\hZ$. We end up the article with numerical results and illustrative examples shown in Section 7.
 
\section{Preliminary~:~convergence of abstract rejection sampling algorithms}


\begin{prop}
\label{prop-delamort}
i) Assume that we have a sequence $(\xi_n)$ of probability measures on a measurable space $(S,\mathcal{S})$, and $\xi_{dom}$ a probability measure on $(S,\mathcal{S})$, satisfying for any $n\in\Nat$
$$
\frac{d\xi_n}{d\xi_{dom}}=\frac{1}{\varepsilon_n}f_n,$$
with $\varepsilon_n>0$ and $0\leq f_n\leq 1$.

Assume that $f_n\to f$ as $n\to\infty$ point-wise on $S$. 

Then, $\pare{\xi_n}$ converges towards a probability measure $\xi$ satisfying
\begin{equation}
\label{dens-propmort}
\frac{d\xi}{d\xi_{dom}}=\frac{1}{\varepsilon}f,
\end{equation}
with $\varepsilon=\lim_{n\to\infty}\varepsilon_n$.

\vspace{0.3cm}
ii) Moreover, let $(Y_k,I_k)_{k\geq 1}$ be a sequence of i.i.d. random elements taking values in $S\times\{0,1\}$
such that $Y_1\sim\xi_{dom}$ and $\P[I_1=1|Y_1=y]=f(y)$ for all $y\in S$. Define $\tau\defi\min(k\geq 1=I_k=1)$. Then, $\P(Y_\tau\in dy)=\xi(dy)$.

\end{prop}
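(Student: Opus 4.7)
My plan is to treat the two parts almost independently, with part (i) reducing to dominated convergence plus Scheff\'e, and part (ii) being the standard geometric decomposition for rejection sampling.

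For part (i), I would first extract $\varepsilon_n$ explicitly from the normalization. Since $\xi_n$ is a probability measure, integrating the given density against $\xi_{dom}$ forces
\[
\varepsilon_n = \int_S f_n \, d\xi_{dom}.
\]
Because $0 \le f_n \le 1$ and $f_n \to f$ pointwise, dominated convergence gives $\varepsilon_n \to \varepsilon := \int_S f \, d\xi_{dom}$. Assuming $\varepsilon>0$ (the only point where the statement tacitly needs non-degeneracy; one should flag this), the formula \eqref{dens-propmort} defines a bona fide probability measure $\xi$.

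To upgrade this to convergence $\xi_n \to \xi$, I would use Scheff\'e's lemma applied to the densities: $f_n/\varepsilon_n \to f/\varepsilon$ pointwise on $S$, all are probability densities with respect to $\xi_{dom}$, so convergence holds in total variation (in particular setwise). Equivalently, for any $A \in \mathcal{S}$,
\[
\xi_n(A) \;=\; \int_A \frac{f_n}{\varepsilon_n}\, d\xi_{dom} \;\longrightarrow\; \int_A \frac{f}{\varepsilon}\, d\xi_{dom} \;=\; \xi(A),
\]
by dominated convergence again (using $0 \le f_n/\varepsilon_n \le 1/\varepsilon_n$, which is eventually bounded by $2/\varepsilon$).

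For part (ii), I would condition on the first acceptance time. The random variables $(Y_k,I_k)$ are i.i.d., and by construction $\P(I_1 = 1) = \int_S f(y)\, d\xi_{dom}(y) = \varepsilon$, while for any $A \in \mathcal{S}$,
\[
\P(I_1 = 1,\, Y_1 \in A) \;=\; \int_A f(y)\, d\xi_{dom}(y) \;=\; \varepsilon\, \xi(A).
\]
Decomposing on the value of $\tau$ and using independence across the i.i.d.\ samples,
\[
\P(Y_\tau \in A) \;=\; \sum_{k \ge 1} \P(I_1 = 0, \ldots, I_{k-1} = 0,\, I_k = 1,\, Y_k \in A)
\;=\; \sum_{k \ge 1} (1-\varepsilon)^{k-1} \varepsilon\, \xi(A) \;=\; \xi(A),
\]
where the geometric series converges because $\varepsilon > 0$ (which also yields $\tau < \infty$ almost surely).

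Honestly I do not expect a real obstacle here: both parts are standard. If anything the only subtlety worth making explicit is the tacit assumption $\varepsilon > 0$ in (i), without which $\xi$ is not defined and $\tau$ in (ii) is infinite almost surely; the proof as sketched isolates exactly where that hypothesis is used. The mode of convergence in (i) is not specified in the statement, but Scheff\'e gives the strongest natural one (total variation), from which weak convergence is immediate, so I would simply state and prove the stronger conclusion.
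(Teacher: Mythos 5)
Your proof is correct, and for part (i) it follows essentially the same route as the paper: read off $\varepsilon_n$ from the normalization $\xi_n(S)=1$, pass to the limit by dominated convergence, and conclude setwise convergence $\xi_n(A)\to\xi(A)$ for every $A\in\mathcal{S}$ (the paper's displayed identification of $\varepsilon_n$ actually has a slip — it writes $\varepsilon_n$ as the reciprocal of $\int_S f_n\,d\xi_{dom}$, whereas your $\varepsilon_n=\int_S f_n\,d\xi_{dom}$ is the one consistent with the stated density $\tfrac{1}{\varepsilon_n}f_n$). Your invocation of Scheff\'e upgrades this to total variation convergence, which is stronger than what the paper states but costs nothing. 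The genuine difference is part (ii): the paper does not prove it at all, referring instead to Proposition 1 of Beskos and Roberts, while you give the standard self-contained geometric decomposition over the first acceptance time; that argument is complete and correct. You are also right to flag the tacit hypothesis $\varepsilon>0$ (equivalently $\int_S f\,d\xi_{dom}>0$), which the paper never makes explicit but which is needed both for $\xi$ to be well defined in (i) and for $\tau<\infty$ almost surely in (ii); in the paper's application this is harmless since the $f$ there is strictly positive.
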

\begin{proof}
For any $A\in\mathcal{S}$ we have $\xi_n(A)=\frac{1}{\varepsilon_n}\int_Af_n(z)\xi_{dom}(dz)$. By dominated convergence we have
 $$
 \int_Af_n(z)\xi_{dom}(dz)\xrightarrow[n\to\infty]{}\int_Af(z)\xi_{dom}(dz).$$
 Taking $A=S$, and as $\xi_n(S)=1$ for any $n\in\Nat$, we have
 $$
 \varepsilon_n=\dfrac{1}{\int_Sf_n(z)\xi_{dom}(dz)}\xrightarrow[n\to\infty]{}\dfrac{1}{\int_Sf(z)\xi_{dom}(dz)}=:\varepsilon.$$
 
 Setting now for any $A\in\mathcal{S}$, $\xi(A)\defi\frac {1} {\varepsilon} \int_Af(z)\xi_{dom}(dz)$, it is clear that
 $$
 \forall A\in\mathcal{S},\quad\xi_n(A)\xrightarrow[n\to\infty]{}\xi(A).$$
 Then $\xi$ is a probability measure on $(S,\mathcal{S})$. It satisfies \eqref{dens-propmort} by construction. This proves point i). For the proof of point ii), see Proposition 1 in \cite{beskos1}.
\end{proof}

\section{Exact sampling algorithm for a SDE with discontinuous drift (inspired by \cite{beskos2})}
\subsection{Assumptions}
\label{sub-sec:assumptions}
The function $\bar{b}:\R\to\R$ is bounded with bounded first derivative on ${\mathbb R}^{\ast,+}$ and ${\mathbb R}^{\ast,-}$ with a possible discontinuity at point $\{0\}$.
We set $M$ a constant such that
\begin{equation}
\label{eq:M}
\sup_{z\in\R}|\bar{b}(z)|\leq M.
\end{equation}
We suppose that both limits $\lim_{z\rightarrow 0+}\bar{b}(z)=\bar{b}(0+)$ and $\lim_{z\rightarrow 0-}\bar{b}(z)=\bar{b}(0-)$ exist and are finite. The value $\bar{b}(0)$ of the function $\bar{b}$ at $0$ is
of no importance and can be fixed arbitrarily to some constant (possibly different from either $\bar{b}(0+)$ or $\bar{b}(0-)$).

We introduce the notation
\begin{equation}
\label{eq-theta}
\theta\defi \frac{\bar{b}(0+)-\bar{b}(0-)}{2}.
\end{equation}

\subsection{Change of probability}
Let $0<T<\infty$. Denote $C=C([0,T],\R)$ the set of continuous mappings from $[0,T]$ to $\R$ and $\cC$ the Borel 
$\sigma$-field on $C$ induced by the supreme norm.

Let $\P$ be a probability measure on $(C,\cC)$ and $W$ a Brownian motion under $\P$ together with its completed natural filtration $\pare{{\cal F}_t}_{t\geq 0}$. We will denote $\P^{x}=\P\pare{\cdot\,|\,W_0=x}$. When necessary we will denote by $\omega=(\omega_t)_{0\leq t\leq T}$ the coordinate process.

We consider the following SDE 
\begin{equation}
\label{eds}
dX_t=dW_t+\bar{b}(X_t)dt,\quad X_0=x.
\end{equation}
Our objective is to sample along $X_T$.

Let us define on $(C,\cC)$ the probability measure $\W$ by
$$
\frac{d\W}{d\P}=\exp\Big\{-\int_0^T\bar{b}(X_t)dW_t - \frac 1 2 \int_0^T\bar{b}^2(X_t)dt  \Big\}.$$
(Note that the assumptions in § \ref{sub-sec:assumptions} ensure that $\W$ is well defined).

Under $\W$ the process $X$ is a Brownian motion and we have,
$$
\frac{d\P}{d\W}=\exp\Big\{\int_0^T\bar{b}(X_t)dX_t - \frac 1 2 \int_0^T\bar{b}^2(X_t)dt  \Big\}.$$

Thus for any bounded  continuous functional $F:(C,\cC) \to \R$ we have,
\begin{equation}
\label{eq-ef}
\Esp_{\P}^x[F(X)]=\Esp_{\W}^x\big[F(X)\exp\big\{ \int_0^T\bar{b}(X_t)dX_t - \frac 1 2 \int_0^T\bar{b}^2(X_t)dt   \big\}  \big].
\end{equation}

We set $B(x)=\int_0^x\bar{b}(y)dy$. Using the symmetric Itô-Tanaka  formula (see Exercise VI-1-25 in \cite{RY}), and the Occupation times formula (\cite{RY}) we get
$$
B(X_T)-B(X_0)=\int_0^T\bar{b}(X_t)dX_t+\frac{1}{2} \int_0^T\bar{b}'(X_t)dt+\frac{\bar{b}(0+)-\bar{b}(0-)}{2}L^0_T(X),
$$
Thus \eqref{eq-ef} becomes,
$$
\Esp_{\P}^x[F(X)]=\Esp_{\W}^x\big[F(X)\exp\big\{ B(X_T)-B(x) - \theta L^0_T(X) -  \int_0^T\phi(X_t)dt   \big\}  \big],$$
where we have set
$$
\phi(x)\defi \frac{\bar{b}^2(x)+\bar{b}'(x)}{2}.$$
Setting now
$$
\tilde{\phi}(x)=\phi(x)-\inf_{x\in\R}\phi(x),$$
we finally get that for any bounded and continuous functional $F:(C,\cC) \to\R$ we have,
$$
\Esp_{\P}^x[F(X)]\propto\Esp_{\W}^x\big[F(X)\exp\big\{ B(X_T)-B(x) - \theta L^0_T(X)\big\} \exp\big\{-  \int_0^T\tilde{\phi}(X_t)dt   \big\}  \big].
$$

Let us now introduce the probability measure $\Z$ on $(C,\cC)$ defined in the following way
$$
\frac{d\Z}{d\W}(\omega)\propto\exp\big\{ B(X_T(\omega))-B(x) - \theta L^0_T(X)(\omega)\big\}.$$
Under the assumptions of § \ref{sub-sec:assumptions}, $\Z$ is well defined. 
 
 In the sequel we note $\hZ$ the probability measure induced on $(C,\cC)$ by the law of $X$ under $\Z$.
 We have
 \begin{equation}
 \label{eq-dens-loiX-hZ}
 \Esp_{\P}^x[F(X)]=c\,\Esp^x_{\hZ}\big[F(\omega) \exp\big\{-  \int_0^T\tilde{\phi}(\omega_t)dt   \big\}  \big],
 \end{equation}
 where $c$ is a normalizing constant (we make it explicit in the expression above for the purpose of proving Proposition \ref{prop-convZZn} below).
 
\begin{rem} (Interpretation of the probability $\hZ$)
\label{remarque-importante}

Recall that under $\W$ the process $X$ is a Brownian motion and that, by definition,
$$
\frac{d\Z}{d\W}(\omega)\propto\exp\big\{ B(X_T(\omega))-B(x) - \theta L^0_T(X)(\omega)\big\}.$$
In particular, under the probability $\Z$, $X$ is a Brownian motion conditioned on $(X_T,L^0_T)\sim h(y, \ell)dyd\ell$ with
\begin{equation*}
h(y,\ell)dyd\ell \propto\exp\pare{B(y)-B(x) - \theta \ell}{\mathbb W}^{x}\pare{X_T\in dy, L^0_T\in d\ell}.
\end{equation*}
This makes it difficult to sample exactly $X_t$ under $\Z$ for $t\in (0,T)$.
\end{rem} 
 \vspace{0.2cm}
 
 \subsection{Exact simulation algorithm for the solution of \eqref{eds} starting from $x$}
 Let us denote by $K$ an upper bound for $\tilde{\phi}(x)$. Following the spirit of \cite{beskos2} we can thus sample from $X_T$ using the following algorithm.
\begin{center}
\hrulefill

\vspace{-0.2cm}
\textsc{  
EXACT SIMULATION ALGORITHM FOR THE SOLUTION OF (\ref{eds}) starting from $x$.
\vspace{-0.2cm}
\begin{enumerate}
\item Simulate a Poisson Point Process with unit density on $[0,T]\times [0,K]$. The result is a random number $N$ of points of coordinates $(t_1,z_1),\dots, (t_N,z_N)$.
\item Simulate a skeleton $(\omega_{t_1},\ldots,\omega_{t_N},\omega_T)$ where $\omega\sim\hZ$.
\item If $\forall i\in \{1,\dots, N\}$ $\tilde{\phi}(\omega_{t_i})\leq z_i$ accept the skeleton. Else return to step 1.
\end{enumerate}  
}
\vspace{-0.5cm}
\hrulefill
\end{center}
\vspace{0.2cm}

This algorithm produces an exact sampling of $X_T$ under $\P$: it is the final instance $\omega_T$ of an accepted skeleton.

\vspace{0.2cm}

The main issue in the above algorithm is to sample a skeleton of the canonical process under $\hZ$ (Step 2). 

\begin{rem} (Other exact simulation algorithms)
\label{remarque-importante-2}

Other probability changes might be performed in order to try to tackle the exact simulation problem presented in the introduction. For example (though we will not prove it here) it is possible to swap to a probability measure ${\mathbb S}$ under which $X$ has the law of  some Brownian motion with a symmetric two valued drift (solution of equation \eqref{Bang-Bang-intro} in the case where $\theta_0=-\theta_1$) with some prescribed terminal law. Even though the density probability distribution of such bridges may be explicitly computed, it seems difficult to find tractable general rejection bounds for these laws.
\end{rem} 


\section{Recalls on the skew Brownian motion with drift}
In this section, we recall some basic facts concerning the skew Brownian motion with constant drift. Although these facts seem at first quite far away from our purpose, they will be used in the sequel in order to justify that the limit rejection algorithm presented in Section 6 returns an exact sampling under $\hZ$. At the end of this section, we give an algorithm for the simulation of bridges of SBM with constant drift that will be used as a basic building block in the sequel.
 
\subsection{The transition function of the skew Brownian motion with drift}
Let us recall that the Skew Brownian Motion (SBM) with constant drift component $\mu\in\R$, denoted by $B^{\beta,\mu}$, solves
$$
dB^{\beta,\mu}_t=dW_t+\mu dt+\beta dL^0_t(B^{\beta,\mu}).$$

This SDE with local time has a unique strong solution as soon as $|\beta|<1$ (see \cite{legall}). The process $B^{\beta,\mu}$ enjoys the homogeneous Markov property. We shall denote by $p^{\beta,\mu}(t,x,y)$ its transition function.

Let us introduce the function $v^{\beta,\mu}(t,x,y)$ defined by
\begin{equation}
\label{def-vbetamu}
\begin{array}{lll}
v^{\beta,\mu}(t,x,y)&=&
(1-\exp({-\frac{2xy}{t}}))\1_{xy>0}\\
\\
&&+(1+\sign(y)\beta)\exp({-\frac{2xy}{t}}\1_{xy>0})\big[1-\beta\mu\sqrt{2\pi t}\exp\{\frac{(|x|+|y|+t\beta\mu)^2}{2t}\}N^c(\frac{\beta\mu t+|x|+|y|}{\sqrt{t}})\big],
\end{array}
\end{equation}
where $N^c(y)=\frac{1}{\sqrt{2\pi}}\int_y^\infty e^{-z^2/2}dz$.

With this notation we can rewrite the expression of $p^{\beta,\mu}(t,x,y)$ given in \cite{etoremartinez1}.

\begin{prop}
\label{prop-new-density}
We have for all $t>0$, all $x,y\in \R$,
\begin{equation}
\label{new-pbetmu}
p^{\beta,\mu}(t,x,y)=p^{0,\mu}(t,x,y)v^{\beta,\mu}(t,x,y).
\end{equation}
\end{prop}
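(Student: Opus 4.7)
The plan is to verify the factorisation \eqref{new-pbetmu} by direct algebraic rewriting of the closed-form density of $B^{\beta,\mu}$ already established in \cite{etoremartinez1}. Since $p^{0,\mu}(t,x,y)=\tfrac{1}{\sqrt{2\pi t}}\exp(-(y-x-\mu t)^2/(2t))$ is merely the Gaussian transition of a Brownian motion with constant drift $\mu$, the task reduces to forming the quotient $p^{\beta,\mu}/p^{0,\mu}$ from the formula in \cite{etoremartinez1} and checking that it equals $v^{\beta,\mu}$ as written in \eqref{def-vbetamu}. Because $v^{\beta,\mu}$ is piecewise defined through $\1_{xy>0}$, the verification splits into two cases according to the sign of $xy$.

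In the case $xy>0$ the indicator is active and $\exp(-\tfrac{2xy}{t}\1_{xy>0})=e^{-2xy/t}$, so the target value of $v^{\beta,\mu}$ is
$$
1-e^{-2xy/t}+(1+\sgn(y)\beta)e^{-2xy/t}\bigl[1-\beta\mu\sqrt{2\pi t}\,e^{(|x|+|y|+t\beta\mu)^2/(2t)}N^c\bigl((\beta\mu t+|x|+|y|)/\sqrt{t}\bigr)\bigr].
$$
I would identify this expression with the ``same-sign'' piece of $p^{\beta,\mu}/p^{0,\mu}$, using the identity $|y|-|x|=\sgn(y)(y-x)$, valid when $xy>0$, to simplify the Gaussian exponents and produce both the image-charge factor $e^{-2xy/t}$ and the argument of $N^c$ after a completion of the square.

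In the case $xy\le 0$, the first indicator in \eqref{def-vbetamu} vanishes and $\exp(-\tfrac{2xy}{t}\1_{xy>0})=1$, so $v^{\beta,\mu}$ collapses to $(1+\sgn(y)\beta)\bigl[1-\beta\mu\sqrt{2\pi t}\,e^{(|x|+|y|+t\beta\mu)^2/(2t)}N^c\bigl((\beta\mu t+|x|+|y|)/\sqrt{t}\bigr)\bigr]$. This case is the easier one because for $x,y$ of opposite signs one has $(y-x)^2=(|x|+|y|)^2$, so the Gaussian $p^{0,\mu}$ factors cleanly out of the ``opposite-sign'' piece of the density given in \cite{etoremartinez1}, leaving only the drift-related multiplicative correction to identify.

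The step I expect to be the most delicate is the re-expression of the Gaussian-tail integral arising in \cite{etoremartinez1} from the Girsanov correction relating $B^{\beta,\mu}$ to the pure skew Brownian motion $B^{\beta,0}$: it must be recognised as $\beta\mu\sqrt{2\pi t}\,e^{(|x|+|y|+t\beta\mu)^2/(2t)}N^c\bigl((\beta\mu t+|x|+|y|)/\sqrt{t}\bigr)$ by means of a standard completion of the square inside the exponential and a change of variable in the tail integral. Once this identification is carried out, the two cases above match \eqref{def-vbetamu} term by term and the factorisation \eqref{new-pbetmu} follows.
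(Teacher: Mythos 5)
Your proposal follows essentially the paper's own route: Proposition \ref{prop-new-density} is nothing more than an algebraic rewriting of the closed-form transition density of the drifted SBM established in \cite{etoremartinez1}, and the paper's proof is simply the citation of Proposition 4.7 there. Your case split on the sign of $xy$ and the identification of the Gaussian-tail factor $\beta\mu\sqrt{2\pi t}\,e^{(|x|+|y|+t\beta\mu)^2/(2t)}N^c\bigl((\beta\mu t+|x|+|y|)/\sqrt{t}\bigr)$ is exactly the algebra that this rewriting leaves implicit, so the approach matches and I see no gap.
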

\begin{proof}
See \cite{etoremartinez1} (Proposition 4.7).
\end{proof}
\subsection{Bounds for the transition function of the SBM with drift}
In this paragraph, we give bounds on the transition function of the SBM with drift. These bounds will be used in the sequel to find tractable rejection bounds for our algorithm.

Let us set $\overline{\alpha}=\max(\frac{1+\beta}{2},\frac{1-\beta}{2})$
and
\begin{equation}
\label{def-gamma}
\gamma^{\beta,\mu}(t,z)=1-\beta\mu\sqrt{2\pi t}\exp(\frac{(z+t\beta\mu)^2}{2t})N^c(\frac{\beta\mu t + z}{\sqrt{t}}).
\end{equation}
We also set
\begin{equation}
\label{def-c}
c^{\beta,\mu}_{t,x}=\left\{
\begin{array}{lll}
2\overline{\alpha}&\text{if} &\beta\mu\geq 0\\
2\overline{\alpha}\gamma^{\beta,\mu}(t,|x|)&\text{if} &\beta\mu< 0.\\
\end{array}
\right.
\end{equation}

We have the following result.

\begin{lem}
\label{maj-v}
Let $(\beta,\mu)\in (-1,1)\times\R$. We have
\begin{equation}
\label{maj-dens1}
v^{\beta,\mu}(t,x,y)\leq c^{\beta,\mu}_{t,x},\quad\forall x,y\in\R.
\end{equation}
\end{lem}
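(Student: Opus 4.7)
My plan is to rewrite $v^{\beta,\mu}(t,x,y)$ in a unified form that exposes it as a convex combination, and then treat the two cases $\beta\mu\geq 0$ and $\beta\mu<0$ separately. Setting $a\defi \exp(-\tfrac{2xy}{t}\1_{xy>0})\in(0,1]$, $\alpha\defi 1+\sgn(y)\beta$, and $g\defi\gamma^{\beta,\mu}(t,|x|+|y|)$, one checks directly from \eqref{def-vbetamu} and \eqref{def-gamma} that (regardless of the sign of $xy$)
\begin{equation*}
v^{\beta,\mu}(t,x,y)=(1-a)\cdot 1+a\cdot(\alpha g),
\end{equation*}
so $v^{\beta,\mu}(t,x,y)\leq \max(1,\alpha g)$. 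Moreover $\alpha\leq 1+|\beta|=2\overline{\alpha}$ in every case.

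In the case $\beta\mu\geq 0$, the factor $-\beta\mu\sqrt{2\pi t}\exp(\cdots)N^c(\cdots)$ in the definition of $\gamma^{\beta,\mu}(t,z)$ is non-positive, so $g\leq 1$. Hence $\alpha g\leq \alpha\leq 2\overline{\alpha}$, and of course $1\leq 2\overline{\alpha}$, giving $v^{\beta,\mu}(t,x,y)\leq 2\overline{\alpha}=c^{\beta,\mu}_{t,x}$.

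In the case $\beta\mu<0$, the same correction term becomes positive so $g\geq 1$, and I need to show $g\leq \gamma^{\beta,\mu}(t,|x|)=:g_0$, which will reduce the bound to $\max(1,\alpha g)\leq 2\overline{\alpha}g_0=c^{\beta,\mu}_{t,x}$. This amounts to proving that $z\mapsto \gamma^{\beta,\mu}(t,z)$ is non-increasing on $[0,\infty)$. Writing $\gamma^{\beta,\mu}(t,z)=1-\beta\mu\sqrt{2\pi t}\,H\bigl((z+\beta\mu t)/\sqrt{t}\bigr)$ with $H(u)\defi e^{u^{2}/2}N^{c}(u)$, and since $-\beta\mu>0$, the monotonicity reduces to showing $H$ is decreasing on $\R$. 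This is the key technical step, and the main obstacle: a direct computation gives $H'(u)=uH(u)-\tfrac{1}{\sqrt{2\pi}}$, and one concludes via the classical Mills ratio inequality $uN^{c}(u)<\varphi(u)$ for $u>0$ (so $uH(u)<\tfrac{1}{\sqrt{2\pi}}$), while for $u\leq 0$ one has $uH(u)\leq 0<\tfrac{1}{\sqrt{2\pi}}$ trivially; in both cases $H'(u)<0$. Combining the monotonicity of $H$ with the sign of $-\beta\mu$ yields $g\leq g_0$, and the proof is complete.
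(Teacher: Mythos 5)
Your proof is correct, but it takes a genuinely different route from the paper. The paper does not argue on $v^{\beta,\mu}$ directly: it deduces \eqref{maj-dens1} from the factorization \eqref{new-pbetmu} combined with the transition-density comparisons $p^{\beta,\mu}(t,x,y)\leq 2\overline{\alpha}\,p^{0,\mu}(t,x,y)$ (case $\beta\mu\geq 0$) and $p^{\beta,\mu}(t,x,y)\leq 2\overline{\alpha}\,\gamma^{\beta,\mu}(t,|x|)\,p^{0,\mu}(t,x,y)$ (case $\beta\mu<0$), which are imported from Lemma 5.3 of \cite{etoremartinez1}; since $p^{0,\mu}>0$, this is equivalent to the claimed bound on $v^{\beta,\mu}$. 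You instead verify the inequality self-containedly from the explicit formula \eqref{def-vbetamu}: the rewriting of $v^{\beta,\mu}(t,x,y)$ as the convex combination $(1-a)\cdot 1+a\,\alpha\,\gamma^{\beta,\mu}(t,|x|+|y|)$ with $a=\exp(-\tfrac{2xy}{t}\1_{xy>0})$ and $\alpha=1+\sgn(y)\beta\leq 2\overline{\alpha}$ is exact in both the cases $xy>0$ and $xy\leq 0$, the sign discussion of the correction term in \eqref{def-gamma} settles the case $\beta\mu\geq 0$, and the case $\beta\mu<0$ is reduced to the monotonicity of $z\mapsto\gamma^{\beta,\mu}(t,z)$, which you obtain from the decrease of $H(u)=e^{u^2/2}N^c(u)$ via $H'(u)=uH(u)-\tfrac{1}{\sqrt{2\pi}}$ and the Mills ratio bound $uN^c(u)<\tfrac{1}{\sqrt{2\pi}}e^{-u^2/2}$ for $u>0$ (the case $u\leq 0$ being trivial); all these steps check out, including the needed facts $\gamma^{\beta,\mu}(t,|x|)\geq 1$ and $2\overline{\alpha}\geq 1$ when taking the maximum. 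What the paper's argument buys is brevity, at the price of relying on the companion paper; what yours buys is a complete proof within the present note, and as a by-product the monotonicity of $\gamma^{\beta,\mu}(t,\cdot)$, which is essentially the analytic content hidden in the cited Lemma 5.3 (and a similar argument would give Lemma \ref{lem-vbetamu} by symmetry in $x$ and $y$).
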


\begin{proof}
 Equation \eqref{maj-dens1} comes from \eqref{new-pbetmu} and the fact that, if $\beta\mu\geq 0$, we have 
 $p^{\beta,\mu}(t,x,y)\leq 2\bar{\alpha} p^{0,\mu}(t,x,y)$ for all $x,y\in\R$, 
and if $\beta\mu<0$, we have $p^{\beta,\mu}(t,x,y)\leq 2\bar{\alpha}\gamma^{\beta,\mu}(t,|x|) p^{0,\mu}(t,x,y)$ for all $x,y\in\R$ (see, in \cite{etoremartinez1}, Lemma 5.3 and its proof).
\end{proof}

We also have the following lemma.
\begin{lem}
\label{lem-vbetamu}
Let $(\beta,\mu)\in (-1,1)\times\R$. We have
\begin{equation}
\label{maj-dens2}
v^{\beta,\mu}(t,x,y)\leq c^{\beta,\mu}_{t,y},\quad\forall x,y\in\R.
\end{equation}
\end{lem}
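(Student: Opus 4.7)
The plan is to split on the sign of $\beta\mu$, mirroring the definition of $c^{\beta,\mu}_{t,y}$ in \eqref{def-c}. If $\beta\mu \geq 0$, then $c^{\beta,\mu}_{t,y} = 2\overline{\alpha} = c^{\beta,\mu}_{t,x}$, so the claim reduces immediately to Lemma \ref{maj-v}, since the right-hand side of the two lemmas coincides in this regime.

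The substantive case is $\beta\mu < 0$, where I must show $v^{\beta,\mu}(t,x,y) \leq 2\overline{\alpha}\,\gamma^{\beta,\mu}(t,|y|)$. The crucial preliminary fact is that, under $\beta\mu < 0$, the map $z \mapsto \gamma^{\beta,\mu}(t,z)$ is non-increasing on $[0,\infty)$ and bounded below by $1$. Indeed, writing
$$\gamma^{\beta,\mu}(t,z) = 1 - \beta\mu\sqrt{t}\, R\!\left(\frac{z+t\beta\mu}{\sqrt{t}}\right), \qquad R(u) := \sqrt{2\pi}\,e^{u^2/2}N^c(u),$$
both properties follow from the standard monotonicity and positivity of the Mills ratio $R$, combined with $-\beta\mu > 0$ and the linear (increasing) dependence of the inner argument on $z$.

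With this in hand, I split on the sign of $xy$. If $xy \leq 0$, the indicator in \eqref{def-vbetamu} vanishes and
$$v^{\beta,\mu}(t,x,y) = (1+\sgn(y)\beta)\,\gamma^{\beta,\mu}(t,|x|+|y|) \leq (1+|\beta|)\,\gamma^{\beta,\mu}(t,|y|) = 2\overline{\alpha}\,\gamma^{\beta,\mu}(t,|y|),$$
using the monotonicity of $\gamma^{\beta,\mu}(t,\cdot)$ together with $|x|+|y| \geq |y|$. If $xy > 0$, set $a := e^{-2xy/t} \in (0,1)$ and rewrite
$$v^{\beta,\mu}(t,x,y) = 1 + a\bigl[(1+\sgn(y)\beta)\,\gamma^{\beta,\mu}(t,|x|+|y|) - 1\bigr],$$
which is affine in $a$ and is therefore bounded by the larger of its values at the endpoints of $[0,1]$. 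The value at $a=0$ is $1 \leq 2\overline{\alpha}\,\gamma^{\beta,\mu}(t,|y|)$ (since $\gamma^{\beta,\mu} \geq 1$ and $2\overline{\alpha} \geq 1$), while the value at $a=1$ is controlled exactly as in the $xy \leq 0$ case. The main obstacle in the whole argument is the monotonicity and lower bound for $\gamma^{\beta,\mu}(t,\cdot)$ when $\beta\mu < 0$; once that is isolated, the rest is a clean case analysis.
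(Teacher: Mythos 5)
Your proof is correct, but it follows a genuinely different route from the paper's. The paper disposes of \eqref{maj-dens2} by combining the factorization \eqref{new-pbetmu} with the transition-density bound $p^{\beta,\mu}(t,x,y)\leq c^{\beta,\mu}_{t,y}\,p^{0,\mu}(t,x,y)$ established in \cite{etoremartinez1} (Lemma 5.3, proof of Eq.\ (5.7)), i.e.\ it is essentially an external citation, symmetric in spirit to the citation used for Lemma \ref{maj-v}. You instead prove the inequality directly on the explicit expression \eqref{def-vbetamu}: writing $\gamma^{\beta,\mu}(t,z)=1-\beta\mu\sqrt{t}\,R\big((z+t\beta\mu)/\sqrt{t}\big)$ with $R$ the Mills ratio, you get for $\beta\mu<0$ that $z\mapsto\gamma^{\beta,\mu}(t,z)$ is decreasing and $\geq 1$ (both facts are standard; if you want them self-contained, note $R>0$ and $R'(u)=uR(u)-1<0$ by the Gaussian tail bound $N^c(u)<e^{-u^2/2}/(u\sqrt{2\pi})$ for $u>0$), and then the case split on $xy$ works: for $xy\leq 0$, $v^{\beta,\mu}(t,x,y)=(1+\sgn(y)\beta)\gamma^{\beta,\mu}(t,|x|+|y|)\leq 2\overline{\alpha}\,\gamma^{\beta,\mu}(t,|y|)$ by monotonicity, and for $xy>0$ the affine-in-$a$ endpoint argument with $a=e^{-2xy/t}\in(0,1)$ reduces to the values $1$ and the previous case, both dominated by $c^{\beta,\mu}_{t,y}$ since $\gamma^{\beta,\mu}(t,|y|)\geq 1$ and $2\overline{\alpha}\geq 1$; the case $\beta\mu\geq 0$ is indeed immediate from Lemma \ref{maj-v} because \eqref{def-c} gives $c^{\beta,\mu}_{t,x}=c^{\beta,\mu}_{t,y}=2\overline{\alpha}$ there (you could also argue it directly from $0<\gamma^{\beta,\mu}\leq 1$ in that regime, making the whole lemma independent of Lemma \ref{maj-v}). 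What your approach buys is a self-contained, elementary verification within this note, not relying on the companion paper's density estimates; what the paper's approach buys is brevity and consistency with the probabilistic origin of these bounds as comparisons of transition functions.
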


\begin{proof}
This comes again from \eqref{new-pbetmu}, together with the fact that $p^{\beta,\mu}(t,x,y)\leq c^{\beta,\mu}_{t,y}p^{0,\mu}(t,x,y)$,
for all $x,y\in\R$ (see again, in \cite{etoremartinez1}, Lemma 5.3, especially the proof of Equation $(5.7)$).
\end{proof}

\begin{rem}
Note that $v^{\beta,\mu}(t,x,y)>0$ and $\gamma^{\beta,\mu}(t,z)>0$ for any $t\in\R^{*,+},\;x,y,z\in\R$, even for large values of $\mu$ (see Remark 4.8 in \cite{etoremartinez1}).
\end{rem}

\subsection{Sampling bridges of the SBM with drift}
We denote by $q^{\beta,\mu}(t,T,a,b,y)$ the density defined (for  $t<T$) by 
\begin{equation*}
\P[\,B^{\beta,\mu}_t\in dy\;|\;B^{\beta,\mu}_0=a,\,B^{\beta,\mu}_T=b]=q^{\beta,\mu}(t,T,a,b,y)dy.
\end{equation*}
The function $(t,y)\mapsto q^{\beta,\mu}(t,T,a,b,y)$ is the transition density function of a bridge of a SBM with drift relating points $a$ and $b$ in $T$ unit time.

As $q^{0,\mu}(t,T,a,b,y)=q^{0,0}(t,T,a,b,y)$, by Proposition \ref{prop-new-density} we get,
\begin{equation}
\label{dens-ponts}
q^{\beta,\mu}(t,T,a,b,y)=q^{0,0}(t,T,a,b,y)\frac{v^{\beta,\mu}(t,a,y)v^{\beta,\mu}(T-t,y,b)}{v^{\beta,\mu}(T,a,b)}.
\end{equation}

Let us set 
\begin{equation}
\label{def-C}
C^{\beta,\mu}_{t,T,a,b}=\left\{
\begin{array}{lll}
4\overline{\alpha}^2&\text{if} &\beta\mu\geq 0\\
4\overline{\alpha}^2\gamma^{\beta,\mu}(t,|a|)\gamma^{\beta,\mu}(T-t,|b|)&\text{if} &\beta\mu< 0.\\
\end{array}
\right.
\end{equation}
We have
$$
\frac{q^{\beta,\mu}(t,T,a,b,y)}{q^{0,0}(t,T,a,b,y)}=\frac{C^{\beta,\mu}_{t,T,a,b}}{v^{\beta,\mu}(T,a,b)}f^{{\mathfrak B},\beta,\mu}_{a,b}(y),$$
with
\begin{equation}
\label{def-fbetamu}
f^{{\mathfrak B}, \beta,\mu}_{a,b}(y)\defi\frac{v^{\beta,\mu}(t,a,y)v^{\beta,\mu}(T-t,y,b)}{C^{\beta,\mu}_{t,T,a,b}},
\end{equation}
where the superscript ${\mathfrak B}$ appears for the word "Bridge".

Considering \eqref{def-c}, {\eqref{maj-dens1}, \eqref{maj-dens2} and \eqref{def-C} it is clear that
$$
f^{{\mathfrak B},\beta,\mu}_{a,b}(y)\leq 1,\quad\forall y\in\R.$$

We thus propose the following rejection algorithm in order to sample along $q^{\beta,\mu}(t,T,a,b,y)dy$.

\vspace{0.2cm}
\begin{center}
\hrulefill

\textsc{
Auxiliary Algorithm 1: Sampling along $q^{\beta,\mu}(t,T,a,b,y)dy$
}

\hrulefill
\textsc{
\begin{enumerate}
\item Sample a Brownian bridge $Y$ along $q^{0,0}(t,T,a,b,y)$.
\item Evaluate $$f^{{\mathfrak B},\beta,\mu}_{a,b}(Y)\leq 1.$$
\item Draw $U\sim\mathcal{U}([0,1])$. If $U\leq f^{{\mathfrak B},\beta,\mu}_{a,b}(Y)$ accept the proposed value $Y$. Else return to Step~1.
\end{enumerate}
}
\hrulefill
\end{center}
\vspace{0.2cm}

\begin{rem}
\label{rem-mu-large}
Note that the quantities $v^{\beta,\mu}$, $\gamma^{\beta,\mu}$, $c_{t,x}^{\beta,\mu}$, $C^{\beta,\mu}_{t,T,a,b}$, and $f^{{\mathfrak B},\beta,\mu}_{a,b}$ defined respectively in \eqref{def-vbetamu},\eqref{def-gamma}, \eqref{def-c}  \eqref{def-C}, and \eqref{def-fbetamu} involved in the above algorithm depend only on $\mu$ through the product $\beta\mu$. 
This computational fact gives the key ensuring the construction of the limit algorithm by convergence performed at the beginning of Section \ref{sec-conv-2}.
\end{rem}

\section{Convergence of a sequence of probability measures towards $\hZ$}
\label{sec-conv}

In this section, for any $n\in\Nat$ we denote by $X^n$ the solution of
\begin{equation}
\label{edsn}
dX^n_t=dW_t+\bar{b}(X^n_t)dt+\frac 1 n dL^0_t(X^n),\quad X^n_0=x.
\end{equation}
For the existence and uniqueness of solutions to \eqref{edsn} see \cite{legall}.

The starting point of our ideas is that, not surprisingly, we have the following strong convergence result, due to the consistency properties of SDEs with local time (see \cite{legall}).

\begin{thm}[Le Gall \cite{legall}, 1984] 
\label{thm-conv}
Let $X$ be the solution of \eqref{eds}
and $(X^n)$ the sequence of solutions of \eqref{edsn}. We have for all $0<t<T$,
$$
\Esp\big[ \sup_{0\leq s\leq t}|X_s-X^n_s|\, \big]\xrightarrow[n\to\infty]{} 0.$$
\end{thm}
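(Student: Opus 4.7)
The strategy is to exploit the fact that $X$ and $X^n$ are driven by the same Brownian motion $W$, so that the difference $D^n := X - X^n$ is a finite-variation process, and to derive a Gronwall-type estimate for $\Esp[\sup_{s\leq t}|D^n_s|]$. Subtracting \eqref{edsn} from \eqref{eds} kills the martingale part and gives
$$D^n_t = \int_0^t [\bar b(X_s) - \bar b(X^n_s)]\,ds - \frac{1}{n} L^0_t(X^n).$$
Since $D^n$ has zero quadratic variation, $L^0(D^n)\equiv 0$, and Tanaka's formula then yields
$$|D^n_t| \leq \int_0^t |\bar b(X_s) - \bar b(X^n_s)|\,ds + \frac{1}{n} L^0_t(X^n).$$

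The residual term $\tfrac{1}{n} L^0_t(X^n)$ will vanish in $L^1$ as soon as $\sup_n \Esp[L^0_T(X^n)] < \infty$. This uniform bound is standard: applying Tanaka to $|X^n|$ and using that the symmetric local time is supported on $\{X^n=0\}$ gives
$$L^0_t(X^n) \leq |X^n_t|+|x|+ M t + \Big|\int_0^t \sgn(X^n_s)\,dW_s\Big|,$$
and a Burkholder--Davis--Gundy estimate combined with the boundedness of $\bar b$ produces a bound depending only on $T$ and $M$, uniformly in $n$.

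To handle the drift-difference term, decompose $\bar b(x) = b_L(x) + \theta\,\sgn(x)$, where $b_L(x) := \bar b(x) - \theta\,\sgn(x)$. Under the assumptions of Section~\ref{sub-sec:assumptions}, $b_L$ is continuous at $0$ and has bounded derivative on each half-line, hence is globally Lipschitz with some constant $L_0$. The contribution of $b_L$ is then bounded by $L_0\int_0^t |D^n_s|\,ds$, which is the desired input for Gronwall's lemma. The piece involving $\theta$ is $2|\theta|$ times the Lebesgue measure of $\{s\leq t:\ X_s X^n_s <0\}$, which is \emph{not} pointwise controlled by $|D^n_s|$ and is the core difficulty.

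This sign-jump term is the main obstacle, and it is circumvented, following Le Gall, by a Zvonkin-type change of function performed \emph{before} comparing $X$ and $X^n$. Introduce $F(x):=\int_0^x \exp(-2B(y))\,dy$ with $B(y):=\int_0^y \bar b$; since $\bar b$ is bounded, $F'$ is bounded above and below by positive constants, so $F$ is a bi-Lipschitz $C^1$ homeomorphism of $\R$, and estimating $|F(X)-F(X^n)|$ is equivalent to estimating $|D^n|$. The key algebraic point is that $F'$ is continuous while $F''=-2\bar b F'$ has a jump only at $0$; applying the symmetric It\^o--Tanaka formula to $F(X)$ and to $F(X^n)$ therefore eliminates the absolutely continuous drift from both equations and leaves equations driven by $\int F'(\cdot)\,dW$ plus local-time terms at $0$ (of magnitude $\theta$ for $F(X)$ and $\theta + 1/n$ for $F(X^n)$, up to the constants coming from $F'(0)$). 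The problem is thereby reduced to a pure stability statement for SDEs involving the local time of the unknown process as the skewness parameter $1/n$ vanishes, which is precisely the content of Le Gall's general stability theorem in \cite{legall}, proved via a fixed-point argument in $L^1$.
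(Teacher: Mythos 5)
Your reduction via the exponential change of scale contains a genuine gap at the point where you declare $F$ to be a bi-Lipschitz homeomorphism of $\R$ ``since $\bar b$ is bounded, $F'$ is bounded above and below by positive constants''. This is false: with $B(y)=\int_0^y\bar b$ one only has $e^{-2M|x|}\leq F'(x)=e^{-2B(x)}\leq e^{2M|x|}$, so $F'$ may decay or grow exponentially at infinity. For instance $\bar b\equiv M>0$ gives $F(x)=(1-e^{-2Mx})/(2M)$, which maps $\R$ onto a half-line bounded above; in general $F$ need not be onto $\R$ and the transformed diffusion coefficient $\sigma=F'\circ F^{-1}$ need not be bounded nor uniformly elliptic on its natural state space (it can degenerate at a finite boundary point). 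Consequently the final step --- citing ``Le Gall's general stability theorem'' for the transformed equations $dY=\sigma(Y)dW$ versus $dY^n=\sigma(Y^n)dW+\frac1n dL^0_t(Y^n)$ --- is not licensed as stated, because the hypotheses of that theorem (bounded, uniformly elliptic $\sigma$) fail globally; likewise the transfer of the $L^1$ estimate on $\sup_s|F(X_s)-F(X^n_s)|$ back to $\sup_s|X_s-X^n_s|$ requires a localization/moment argument that you do not supply. These defects are repairable (stopping at exit times of large intervals, Gaussian-tail estimates for $\sup_{[0,T]}|X^n|$ uniform in $n$), but as written the proof does not go through.

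There is also an algebraic slip in your ``key algebraic point'': since $B$ is continuous, $F'$ is continuous and the measure $F''(dy)=-2\bar b(y)F'(y)\,dy$ has \emph{no atom} at $0$ (a jump of the density is not an atom), so the symmetric It\^o--Tanaka formula leaves no local-time term of magnitude $\theta$ at all: one gets $F(X_t)=F(x)+\int_0^tF'(X_s)dW_s$ and $F(X^n_t)=F(x)+\int_0^tF'(X^n_s)dW_s+\frac{F'(0)}{n}L^0_t(X^n)$, i.e.\ only the $1/n$-skew term survives. This error is in the ``harmless'' direction, but it signals that the announced reduction is not the one you would actually obtain. Note that the paper's proof avoids the transformation entirely: it rewrites the drift through the occupation-times formula as $\int_\R\nu_n(dy)\,dL^y_t(X^n)$ with $\nu_n(dy)=\bar b(y)dy+\frac1n\delta_0(dy)$, computes the associated functions $f_{\nu_n}$ of Le Gall's Lemma 2.1, checks $f_{\nu_n}\to f_\nu$ in $L^1_{loc}$ by dominated convergence, and then applies Theorem 3.1 of \cite{legall} with $\sigma\equiv 1$, a setting in which all hypotheses of that theorem are immediate; your preliminary Gronwall discussion and the change of scale are then unnecessary, the scale-type functions entering only through Le Gall's own machinery.
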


\begin{proof}
See the Appendix.
\end{proof}

In particular $(X^n)$ converges in law to $X$ under $\P$. This fact will allow us to construct a suitable sequence $\pare{\hZ_n}$ of probability measures converging towards $\hZ$.  

Recall the definition \eqref{eq-theta} of $\theta$. Let us set 
\begin{equation}
\label{eq-mun}
\mu_n=\frac{1+1/n}{2/n}\,\bar{b}(0+)-\frac{1-1/n}{2/n}\,\bar{b}(0-)=\frac{\bar{b}(0+)+\bar{b}(0-)}{2}+\theta n,
\end{equation}
and $b_n(x)=\bar{b}(x)-\mu_n$. 

From \eqref{edsn} we have,
$$
X^n_T=x+W^{SD,n}_T+\mu_nT+\frac 1 nL^0_T(X^n),$$
where the process $W^{SD,n}$ given by
$$
dW^{SD,n}_t=dW_t+b_n(X^n_t)dt$$
is a Brownian motion under the probability measure $\Wsdn$ defined by
\begin{equation}
\label{def-Wsdn}
\frac{d\Wsdn}{d\P}=\exp\Big\{-\int_0^Tb_n(X^n_t)dW_t - \frac 1 2 \int_0^Tb_n^2(X^n_t)dt  \Big\}.
\end{equation}
Note that the assumptions in § \ref{sub-sec:assumptions} ensure that $\Wsdn$ is well defined for all fixed $n\in {\mathbb N}$ and that the law of $X^n$ under $\Wsdn$ is the one of a SBM with drift.

Let us now set $B_n(x)=\int_0^xb_n(z)dz$.
As shown in \cite{etoremartinez1} pp 47-48, using a symmetric It\^o-Tanaka formula we can prove that for any bounded measurable functional $F:(C, \cC)\to\R$,
\begin{equation}
\label{eq:justification}
\Esp_{\P}^x[F(X^n)]=\Esp_{\Wsdn}^x\big[F(X^n)\exp\big\{B_n(X^n_T)-B_n(x)-\int_0^T\phi_n(X^n_t)dt \big\}  \big],
\end{equation}
 where $\phi_n(x)=\dfrac{b_n^2(x)+b_n'(x)+2\mu_n b_n(x)}{2}$. 
 
\begin{rem}
 \label{justification}
 Note that, because of the definition of $b_n$, there is no local time appearing in equality \eqref{eq:justification} after the application of the symmetric It\^o-Tanaka formula. This ensures that there
 is no local time involved in the exponential martingale of the probability change, which makes it tractable (from the point of view of our numerical perspective). Retrospectively, this explains why we defined $b_n$ depending on $n$ as $\bar{b}-\mu_n$ (and not just kept the initial function $\bar{b}$ to perform our computations).
 \end{rem} 
 
We see that
 $$
 \phi_n(x)=\frac{\bar{b}^2(x)+\bar{b}'(x)}{2}-\frac{\mu_n^2}{2}=\phi(x)-\frac{\mu_n^2}{2},$$
 and
 $$
 \phi_n(x)-\inf_{x\in\R}\phi_n(x)=\phi(x)-\frac{\mu_n^2}{2}-\inf_{x\in\R}(\phi(x)-\frac{\mu_n^2}{2})=\tilde{\phi}(x),$$
so that $\phi_n -\inf \phi_n$ does not depend on $n$ !
 
 Consequently, we have that 
 \begin{equation*}
 \Esp_{\P}^x[F(X^n)]\propto\Esp_{\Wsdn}^x\big[F(X^n)\exp\big\{B_n(X^n_T)-B_n(x)-\int_0^T\tilde{\phi}(X^n_t)dt \big\}  \big].
 \end{equation*}

 Let us now define the probability measure $\Z_n$ on $(C,\cC)$ by
\begin{equation}
\label{def-Zn}
\frac{d\Z_n}{d\Wsdn}(\omega)\propto\exp\big\{ B_n(X^n_T(\omega))-B_n(x) \big\},
\end{equation}
and $\hZ_n$ the probability measure induced on $(C,\cC)$ by the law of $X^n$ under $\Z_n$. Under the assumptions in~ §~\ref{sub-sec:assumptions} the probability measures $\Z_n$ and $\hZ_n$ are well defined for all fixed $n\in {\mathbb N}$. We have
\begin{equation}
\label{eq-dens-loiXn-hZn}
\Esp_{\P}^x[F(X^n)]=c_n\Esp_{\hZ_n}^x\big[F(\omega)\exp\big\{-\int_0^T\tilde{\phi}(\omega_t)dt\big\}\big],
\end{equation}
with $c_n$ a finite normalizing constant.
\vspace{0.1cm}

The law $\hZ_n$ can be well described: it is the law of a SBM with drift whose terminal position is distributed along a density $h_n$ depending on the function $B_n $ (see Subsection \ref{ss-samplingZn}). 

In \cite{etoremartinez1} we managed to sample exactly along \eqref{edsn} using skeletons under $\hZ_n$ as proposals and the function $\exp\big\{-\int_0^T\tilde{\phi}(\omega_t)dt \big\}$ for a rejection rule.

Remember that $(X^n)$ converges in law to $X$ under $\P$. Hence, comparing \eqref{eq-dens-loiX-hZ} and \eqref{eq-dens-loiXn-hZn} indicates that
the sequence of probability measures $(\hZ_n)$ converges weakly to $\hZ$. This is shown rigorously in the following subsection. Combining then the results of Subsection \ref{ss-samplingZn} and Proposition \ref{prop-delamort} will enable us to sample along 
$\hZ$ (see Section~\ref{sec-conv-2}).

\subsection{The probability measure $\hZ$ as a limit of the sequence $(\hZ_n)$}

\begin{prop}
\label{prop-convZZn}
We have
$$
\hZ_n\xrightarrow[n\to\infty]{w}\hZ.$$

\end{prop}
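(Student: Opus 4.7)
The plan is to combine Le~Gall's strong convergence result (Theorem~\ref{thm-conv}), which yields weak convergence $\mathcal{L}_\P(X^n)\to \mathcal{L}_\P(X)$ of the $\P$-laws on $(C,\cC)$, with the Girsanov-type identities \eqref{eq-dens-loiX-hZ} and \eqref{eq-dens-loiXn-hZn}, which present $\hZ_n$ and $\hZ$ as ``untwists'' of these $\P$-laws by the common bounded functional $\Psi(\omega)\defi\exp\{\int_0^T\tilde\phi(\omega_t)\,dt\}$. Since $0\le\tilde\phi\le K$ on $\R$, one has the uniform bound $1\le \Psi\le e^{KT}$.

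First, I would rewrite the identities \eqref{eq-dens-loiX-hZ} and \eqref{eq-dens-loiXn-hZn} in ratio form: taking $F=1$ therein identifies the normalizing constants as $c_n=1/\Esp^x_{\hZ_n}[\Psi^{-1}]$ and $c=1/\Esp^x_{\hZ}[\Psi^{-1}]$; inverting the change of measure (which is legitimate since $\Psi>0$) gives, for every bounded continuous $F:(C,\cC)\to\R$,
$$\Esp_{\hZ_n}^x[F(\omega)]=\frac{\Esp_\P^x[F(X^n)\,\Psi(X^n)]}{\Esp_\P^x[\Psi(X^n)]},\qquad \Esp_{\hZ}^x[F(\omega)]=\frac{\Esp_\P^x[F(X)\,\Psi(X)]}{\Esp_\P^x[\Psi(X)]}.$$
In this way the task reduces to passing to the limit, as $n\to\infty$, in numerator and denominator separately, under the weak convergence $\mathcal{L}_\P(X^n)\to\mathcal{L}_\P(X)$.

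Second, I would verify that $\omega\mapsto\Psi(\omega)$ is $\mathcal{L}_\P(X)$-a.s.\ continuous for the uniform topology of $C$. This is the subtle point, because $\tilde\phi$ is discontinuous at $0$. If however $\omega^{(k)}\to\omega$ uniformly on $[0,T]$ and the level set $\{t\in[0,T]:\omega(t)=0\}$ is Lebesgue-negligible, then $\tilde\phi(\omega^{(k)}(t))\to\tilde\phi(\omega(t))$ for a.e.\ $t\in[0,T]$ (by continuity of $\tilde\phi$ off $0$), and the bound $\tilde\phi\le K$ together with dominated convergence yields $\Psi(\omega^{(k)})\to\Psi(\omega)$. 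Under $\P$, $X$ is a continuous semimartingale with quadratic variation $dt$, so the Occupation Times formula gives
$$\int_0^T\1_{\{X_t=0\}}\,dt=\int_\R\1_{\{a=0\}}L^a_T(X)\,da=0\quad\P\text{-a.s.,}$$
hence $\mathcal{L}_\P(X)$ puts full mass on the set of paths $\omega$ on which $\Psi$ is continuous. The same holds for $F\cdot\Psi$ whenever $F\in C_b(C)$.

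Finally, applying Portmanteau to the weakly convergent sequence $\mathcal{L}_\P(X^n)\to\mathcal{L}_\P(X)$ and to the bounded, $\mathcal{L}_\P(X)$-a.s.\ continuous integrands $\Psi$ and $F\cdot\Psi$ shows that both numerator and denominator in the first ratio above converge to their counterparts in the second. The limit denominator $\Esp_\P^x[\Psi(X)]\ge 1$ is nonzero, so the ratio passes to the limit and gives $\Esp_{\hZ_n}^x[F]\to\Esp_{\hZ}^x[F]$ for every $F\in C_b(C)$, which is exactly the asserted weak convergence $\hZ_n\xrightarrow[n\to\infty]{w}\hZ$. The main obstacle is the second step, namely the $\mathcal{L}_\P(X)$-a.s.\ continuity of the occupation functional $\Psi$ despite the jump of $\tilde\phi$ at $0$; once this is in place, the remaining steps are routine bookkeeping on the two changes of measure.
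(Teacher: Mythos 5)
Your proof is correct and follows essentially the same route as the paper: weak convergence of the $\P$-laws of $X^n$ (Theorem \ref{thm-conv}) combined with the density identities \eqref{eq-dens-loiX-hZ} and \eqref{eq-dens-loiXn-hZn}, the boundedness of the factor $1/\Phi$, and convergence of the normalizing constants (your ratio formulation is just an equivalent bookkeeping of $c_n\to c$). The only divergence is at the continuity step: the paper simply asserts that $\omega\mapsto 1/\Phi(\omega)$ is bounded and continuous on $(C,\cC)$, whereas you prove the sharper statement — genuinely needed, given the jump of $\tilde{\phi}$ at $0$ — that it is continuous at $\mathcal{L}_{\P}(X)$-almost every path, via the occupation times formula showing the path spends zero Lebesgue time at $0$, and then invoke the almost-sure-continuity version of the mapping theorem; this is a more careful treatment of the same step rather than a different approach.
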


\begin{proof}
Let $\widehat{\P}$ (resp. $\widehat{\P}_n$) denote the probability measure induced on $(C,\cC)$ by the law of $X$ (resp. of $X^n$) under $\P$. 
It is clear from Theorem \ref{thm-conv} that $\widehat{\P}_n\xrightarrow[n\to\infty]{w}\widehat{\P}$.

Let us define $\Phi:(C, {\cal C})\to\R$ by 
$$\Phi(\omega)\defi\exp\big\{-\int_0^T\tilde{\phi}(\omega_t)dt\big\},\quad\forall\omega\in C.$$
Note that $0<\Phi(\omega)\leq 1$. Thanks to \eqref{eq-dens-loiX-hZ} and \eqref{eq-dens-loiXn-hZn} we have
$$
\frac{d\widehat{\P}}{d\hZ}(\omega)=c\,\Phi(\omega)\quad\text{and}\quad\frac{d\widehat{\P}_n}{d\hZ_n}(\omega)=c_n\,\Phi(\omega)
$$
and thus,
\begin{equation}
\label{eq-dens-hZ-hP}
\frac{d\hZ}{d\widehat{\P}}(\omega)=\frac 1 c \,\frac 1 {\Phi(\omega)}\quad\text{and}\quad\frac{d\hZ_n}{d\widehat{\P}_n}(\omega)=
\frac 1 {c_n}\,\frac 1 {\Phi(\omega)}.
\end{equation}
Under the assumptions of § \ref{sub-sec:assumptions}, the functional $\omega \mapsto 1/{\Phi(\omega)}$ is easily seen to be bounded and continuous from $(C, {\cal C})$ to $\R$ for the topology of the supreme norm. Using that $\widehat{\P}_n\xrightarrow[n\to\infty]{w}\widehat{\P}$ we see that
\begin{equation}
\label{eq-dens-hZ-hP-2}
\int_C\frac{\widehat{\P}_n(d\omega)}{\Phi(\omega)}\xrightarrow[n\to\infty]{}\int_C\frac{\widehat{\P}(d\omega)}{\Phi(\omega)}.
\end{equation}
Since $\hZ_n$ is a probability measure on $(C, {\cal C})$,  we also have $1=\hZ_n(C) = \hZ(C)$. In view of \eqref{eq-dens-hZ-hP} and \eqref{eq-dens-hZ-hP-2} this implies that necessarily $(1/c_n)_n$ is a convergent sequence and that 
\begin{equation}
\label{eq-cn-c}
\lim_n \frac 1 {c_n}=\frac 1 c.
\end{equation}

Therefore, for any bounded and continuous funcional $\omega \mapsto F(\omega)$ from $(C, {\cal C})$ to $\R$, 
$$
\int_C F(\omega)d\hZ_n = \frac{1}{c_n}\int_C F(\omega)\frac{\widehat{\P}_n(d\omega)}{\Phi(\omega)}\xrightarrow[n\to\infty]{}\frac{1}{c}\int_C F(\omega)\frac{\widehat{\P}(d\omega)}{\Phi(\omega)}= \int_C F(\omega)d\hZ$$
and the result follows.
\end{proof}

\subsection{Sampling a skeleton under $\hZ_n$}
\label{ss-samplingZn}

We have the following proposition.

\begin{prop}
For any $n\in\Nat$ the law $\hZ_n$ is the one of a SBM $B^{\frac 1 n,\mu_n}$ with drift  $\mu_n$ conditionally on $B^{\frac 1 n,\mu_n}_T\sim h_n$
with
$$
h_n(y)=C_n\exp\pare{B_n(y)-B_n(x)} p^{\frac 1 n,\mu_n}(T,x,y),
$$
where $C_n$ is the normalizing constant such that $\int h_n(y)dy = 1$. 
\end{prop}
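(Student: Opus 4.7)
The plan is to unwind the two definitions—$\Z_n$ given via a density with respect to $\Wsdn$, and $\hZ_n$ given as the image of $\Z_n$ under the coordinate map $X^n$—and then use the conditional/disintegration structure of the SBM with drift under $\Wsdn$.

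First I would recall that, as is stated just after the definition \eqref{def-Wsdn}, under the probability $\Wsdn$ the process $X^n$ is a skew Brownian motion with skewness parameter $1/n$ and constant drift $\mu_n$ started at $x$; in other words $X^n$ under $\Wsdn$ is distributed as $B^{1/n,\mu_n}$ started at $x$, and in particular $X^n_T$ admits the density $y\mapsto p^{1/n,\mu_n}(T,x,y)$.

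Next, for any bounded measurable functional $F:(C,\cC)\to\R$, I would chain the definitions:
\begin{equation*}
\Esp_{\hZ_n}[F(\omega)]=\Esp_{\Z_n}[F(X^n)]=\frac{1}{c'_n}\,\Esp_{\Wsdn}^x\!\left[F(X^n)\exp\!\big(B_n(X^n_T)-B_n(x)\big)\right],
\end{equation*}
where $c'_n$ is the normalizing constant appearing implicitly in \eqref{def-Zn}. The crucial point is that the Radon--Nikodym derivative $\exp(B_n(X^n_T)-B_n(x))$ depends on the path only through its terminal value. I would therefore condition on $X^n_T$ under $\Wsdn$:
\begin{equation*}
\Esp_{\Wsdn}^x\!\left[F(X^n)\exp(B_n(X^n_T)-B_n(x))\right]=\int_{\R}\Esp_{\Wsdn}^x\!\left[F(X^n)\mid X^n_T=y\right]e^{B_n(y)-B_n(x)}p^{\frac{1}{n},\mu_n}(T,x,y)\,dy.
\end{equation*}

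Since $X^n$ under $\Wsdn$ is an SBM with drift $\mu_n$, the regular conditional law $\Wsdn(\,\cdot\mid X^n_T=y)$ is precisely the law of the corresponding SBM bridge $B^{1/n,\mu_n}$ from $x$ to $y$ in time $T$. Plugging this into the previous display and taking $F\equiv 1$ identifies the normalizing constant $c'_n$ with $1/C_n$, where $C_n$ is determined by $\int h_n(y)\,dy=1$. Combining the two identities yields, for every bounded measurable $F$,
\begin{equation*}
\Esp_{\hZ_n}[F(\omega)]=\int_{\R}\Esp^{x\to y}_{B^{1/n,\mu_n}}\!\left[F(\omega)\right]h_n(y)\,dy,
\end{equation*}
which is exactly the assertion: $\hZ_n$ is the law of $B^{1/n,\mu_n}$ started at $x$ and conditioned on its terminal value being distributed according to $h_n$. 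The only mildly delicate step is the disintegration on $X^n_T$; this is completely routine once one knows the transition density $p^{1/n,\mu_n}$ is continuous and strictly positive (as recalled in the remark after Lemma~\ref{lem-vbetamu}), so there is no real obstacle.
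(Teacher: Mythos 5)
Your argument is correct and is exactly the standard ``bias the endpoint, keep the bridges'' computation that the paper delegates to \cite{beskos2}: since $X^n$ under $\Wsdn$ is a SBM with drift $\mu_n$ and the density $d\Z_n/d\Wsdn$ depends on the path only through $X^n_T$, disintegrating over $X^n_T$ reweights the terminal law by $e^{B_n(y)-B_n(x)}$ while leaving the bridge laws untouched, which is precisely the stated conclusion. So you have simply written out in the SBM-with-drift setting the proof the paper cites, including the correct identification of the normalizing constant, and no gap remains beyond routine measure-theoretic points you already acknowledge.
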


\begin{proof}
See \cite{beskos2}.
\end{proof}

Let $n_0$ be fixed and $0=t_0<t_1<\ldots<t_{n_0}<T$. Set $y_0=x$ to simplify the notations, the law of $(\omega_{t_1},\ldots,\omega_{t_{n_0}},\omega_T)$ under $\hZ_n$ is given by
\small
\begin{equation}
\label{eq-cond}
h_n(y)\prod_{i=0}^{n_0-1} q^{\frac 1 n,\mu_n}(t_{i+1}-t_{i},T-t_{i},y_{i},y,y_{i+1})dy_1\ldots dy_{n_0}dy.
\end{equation}
\normalsize

Once $\omega_T$ has been sampled along $h_n(y)dy$, we can sample $\omega_{t_1}$ along $q^{\frac 1 n,\mu_n}(t_1,T,x,\omega_T,y_1)dy_1$ and each $\omega_{t_{i+1}}$ along 
$q^{\frac 1 n,\mu_n}(t_{i+1}-t_i,T-t_i,\omega_{t_i},\omega_T,y_{i+1})dy_{i+1}$, using the Auxiliary Algorithm 1. 

\vspace{0.4cm}
In order to sample along $h_n(y)dy$ we make use of the following considerations.
We have
\begin{align*}
h_n(y)=&C_n\exp\big( B_n(y)-B_n(x) \big)p^{0,\mu_n}(T,x,y)v^{\frac{1}{n},\mu_n}(T,x,y)\\
=&C_n\exp\big( -\mu_n(y-x)+\int_x^y\bar{b}(z)dz\big)\times\exp\big(+\mu_n(y-x)-\frac{\mu_n^2}{2}T\big)p^{0,0}(T,x,y)v^{\frac{1}{n},\mu_n}(T,x,y)\\
=&C_ne^{-\frac{\mu_n^2}{2}T}\exp\big( B(y)-B(x) \big)v^{\frac{1}{n},\mu_n}(T,x,y)p^{0,0}(T,x,y).
\end{align*}

Recall that $M$ denotes an upper bound for the function $z\mapsto |\bar{b}|(z)$ (see \eqref{eq:M} of our assumptions in § \ref{sub-sec:assumptions}). Then, using the result of Lemma \ref{maj-v} and performing easy computations, we easily see that for any $0<\delta <1$~:~
$$
\frac{h_n(y)}{p^{0,0}(T/(1-\delta),x,y)}= C_n\frac{e^{T^2M^2/\delta}}{\sqrt{1-\delta}}e^{-\frac{\mu_n^2}{2}T}c^{\frac 1 n,\mu_n}_{T,x}\,f_{\delta}^{{\mathfrak h}, \frac{1}{n}, \mu_n}(y),$$
with
$$
f_{\delta}^{{\mathfrak h}, \frac{1}{n}, \mu_n}(y)=\sqrt{1-\delta}\,\exp\pare{B(y)-B(x) -\frac{T^2M^2}{\delta}}\frac{p^{0,0}(T,x,y)}{p^{0,0}(T/(1-\delta),x,y)}\frac{v^{\frac 1 n,\mu_n}(T,x,y)}{c^{\frac 1 n,\mu_n}_{T,x}}.$$

 Using \eqref{maj-dens1}  one may easily check that $f_{\delta}^{{\mathfrak h}, \frac{1}{n}, \mu_n}(y)\leq 1$ for any $y\in\R$. One might then optimize w.r.t. $\delta\in (0,1)$ in order to find $f_{\delta}^{{\mathfrak h}, \frac{1}{n}, \mu_n}$ closest to $1$. 
 
 Let us set for simplicity, $f^{{\mathfrak h}, \frac{1}{n}, \mu_n} = f_{1/2}^{{\mathfrak h}, \frac{1}{n}, \mu_n}$. We deduce therefore the following procedure in order to sample along $h_n(y)dy$.

\vspace{0.2cm}
\begin{center}
\hrulefill

\textsc{
Auxiliary Algorithm 2: Sampling along $h_n(y)dy$
}

\hrulefill

\textsc{
\begin{enumerate}
\item Sample $Y\sim\cN(x,2T)$.
\item Evaluate $$f^{{\mathfrak h}, \frac{1}{n}, \mu_n}(Y)\leq 1.$$
\item Draw $U\sim\mathcal{U}([0,1])$. If $U\leq f^{{\mathfrak h}, \frac{1}{n}, \mu_n}(Y)$ accept the proposed value $Y$. Else return to Step~1.
\end{enumerate}
}

\hrulefill
\end{center}

\section{Direct exact sampling of a skeleton under $\hZ$ (Step 2 of the Exact Simulation Algorithm)}
\label{sec-conv-2}

 Proposition \ref{prop-delamort} will now play a crucial role. 
 
 Recall the definition \eqref{eq-theta} of $\theta$. Let us denote
$$
v^{\theta}(t,x,y)=
(1-e^{-2xy/t})\1_{xy>0}
+e^{-2xy/t}\big[1-\theta\sqrt{2\pi t}\exp\{\frac{(|x|+|y|+t\theta)^2}{2t}\}N^c(\frac{\theta t+|x|+|y|}{\sqrt{t}})\big],
$$
$$
\gamma^{\theta}(t,z)=1-\theta\sqrt{2\pi t}\exp(\frac{(z+t\theta)^2}{2t})N^c(\frac{\theta t + z}{\sqrt{t}}),
$$
$$
c^{\theta}_{t,x}=\left\{
\begin{array}{lll}
1&\text{if} &\theta\geq 0\\
\gamma^{\theta}(t,|x|)&\text{if} &\theta< 0,\\
\end{array}
\right.
\quad
\text{and}
\quad
C^{\theta}_{t,T,a,b}=\left\{
\begin{array}{lll}
1&\text{if} &\theta\geq 0\\
\gamma^{\theta}(t,|a|)\gamma^{\theta}(T-t,|b|)&\text{if} &\theta< 0.\\
\end{array}
\right.
$$
Remember our definitions \eqref{def-vbetamu},\eqref{def-gamma},\eqref{def-c} and  \eqref{def-C} and Remark \ref{rem-mu-large}. It is clear from \eqref{eq-mun} that  $\frac{1}{n}\mu_n\rightarrow \theta$ (as $n$ tends to $+\infty$), so that we  have,
$$
\begin{array}{ll}
v^{\frac 1 n,\mu_n}(t,x,y)\xrightarrow[n\to\infty]{}v^\theta(t,x,y)&\forall (t,x,y)\in \R^+\times \R\times \R,\\
\gamma^{\frac 1 n,\mu_n}(t,z)\xrightarrow[n\to\infty]{}\gamma^\theta(t,z)&\forall (t,z)\in \R^+\times \R,\\
c^{\frac 1 n,\mu_n}_{t,x}\xrightarrow[n\to\infty]{}c^{\theta}_{t,x}&\forall (t,x)\in \R^+\times \R,\\
C^{\frac 1 n,\mu_n}_{t,T,a,b}\xrightarrow[n\to\infty]{}C^{\theta}_{t,T,a,b}&\forall (t,T,a,b)\in \R^+\times \R^+\times \R\times \R.\\
\end{array}
$$

Let us now examine the sequence $(f_\delta^{{\mathfrak h}, \frac{1}{n}, \mu_n})$ of the rejection functions used in the Auxiliary Algorithm 2. From the same reasons as above, it is clear that $(f_\delta^{{\mathfrak h}, \frac{1}{n}, \mu_n})$
converges towards
$$
f_{\delta}^{{\mathfrak h}, \theta}(y)=\sqrt{1-\delta}\,\exp\pare{B(y)-B(x) -\frac{T^2M^2}{\delta}}\frac{p^{0,0}(T,x,y)}{p^{0,0}(T/(1-\delta),x,y)}\frac{v^{\theta}(T,x,y)}{c^{\theta}_{T,x}}\leq 1,$$
this convergence being dominated. 
Thus, applying the result of Proposition \ref{prop-delamort}, the sequence of laws $(h_n(y)dy)$ converges to some limit law $h_\theta(y)dy$.

In the same manner, for any fixed $a,b\in\R$, the sequence $(f^{{\mathfrak B}, \frac 1 n,\mu_n}_{a,b})$ of rejection functions used in Auxiliary Algorithm~1 converges towards
 $$f^{{\mathfrak B},\theta}_{a,b}(y)\defi\frac{v^{\theta}(t,a,y)v^{\theta}(T-t,y,b)}{C^{\theta}_{t,T,a,b}}\leq 1,$$
 this convergence being dominated.
 
 Consequently, the law $q^{\frac 1 n,\mu_n}(t,T,a,b,y)dy$ converges towards a limit law $q^{\theta}(t,T,a,b,y)dy$.
 
 Let again $n_0$ be fixed and $0<t_1<\ldots<t_{n_0}<T$. Passing to the limit in \eqref{eq-cond} we get that the law of 
 $(\omega_{t_1},\ldots,\omega_{t_{n_0}},\omega_T)$ under $\hZ_n$ converges (with $y_0=x$) towards
 \small
 \begin{equation}
 \label{eq-condlim}
 h_\theta(y)\prod_{i=0}^{n_0-1} q^{\theta}(t_{i+1}-t_{i},T-t_{i},y_{i},y,y_{i+1})dy_1\ldots dy_{n_0}dy.
\end{equation}

\normalsize
Consequently, from Proposition \ref{prop-convZZn}, we conclude that the law given by \eqref{eq-condlim} is nothing else than the law of $(\omega_{t_1},\ldots,\omega_{t_{n_0}},\omega_T)$ under $\hZ$.

 Using again Proposition \ref{prop-delamort} and the above considerations we can propose the expected algorithm in order to sample skeletons under $\hZ$. It will use the two following Limit Auxiliary Algorithms.

 \vspace{0.2cm}
\begin{center}
\hrulefill

 \textsc{
Limit Auxiliary Algorithm 1: Sampling along $h_\theta(y)dy$
}

\hrulefill
\textsc{
\begin{enumerate}
\item Sample $Y\sim\cN(x,2T)$.
\item Evaluate $$f^{{\mathfrak h},\theta}(Y)\leq 1.$$
\item Draw $U\sim\mathcal{U}([0,1])$. If $U\leq f^{{\mathfrak h}, \theta}(Y)$ accept the proposed value $Y$. Else return to Step 1.
\end{enumerate}
}

\hrulefill
\end{center}
 \vspace{0.2cm}
\begin{center}
\hrulefill

\textsc{
Limit Auxiliary Algorithm 2: Sampling along $q^{\theta}(t,T,a,b,y)dy$
}

\hrulefill
\textsc{
\begin{enumerate}
\item Sample a Brownian bridge $Y$ along $q^{0,0}(t,T,a,b,y)$.
\item Evaluate $$f^{\mathfrak{B},\theta}_{a,b}(Y)\leq 1.$$
\item Draw $U\sim\mathcal{U}([0,1])$. If $U\leq f^{\mathfrak{B},\theta}_{a,b}(Y)$ accept the proposed value $Y$. Else return to Step 1.
\end{enumerate}
}

\hrulefill
\end{center}
\vspace{0.2cm}

 \vspace{0.2cm}
\begin{center}
\hrulefill

 \textsc{
Performing Step 2 of the Exact Simulation Algorithm.\\
 Sampling $(\omega_{t_1},\ldots,\omega_{t_{n_0}},\omega_T)$ under $\hZ$ (starting from $x$)
}

\hrulefill

\textsc{
\begin{enumerate}
\item Sample $\omega_T$ along $h_\theta(y)dy$ using the Limit Auxiliary Algorithm 1.
\item Sample $\omega_{t_1}$ along $q^\theta(t_1,T,x,\omega_T,y)dy$ using the Limit Auxiliary Algorithm 2.
\item For $i=2,\ldots,n_0$, sample $\omega_{t_{i+1}}$ along $q^\theta(t_{i+1}-t_i,T-t_i,\omega_{t_i},\omega_T,y)dy$ using the Limit Auxiliary Algorithm 2.
\end{enumerate}
}

\hrulefill
\end{center}

\section{Numerical Experiments}
\subsection{Exact simulation of a Brownian motion with two-valued (or alternate) drift}
In this paragraph, we choose to exhibit numerical results obtained with the exact limit algorithm for the simplest non-trivial cases
\begin{equation*}
dX_t=dW_t\pm\sgn(X_t)dt,\quad X_0=0,
\end{equation*}
corresponding to either $\theta_0=-\theta_1=\pm1$ in \eqref{Bang-Bang-intro} ($\bar{b}(y) = \pm\sgn(y)$ in \eqref{eds-intro}).
Indeed, in this symmetric case a benchmark is provided by the explicit and computable density of $X_T$ given in \cite{kara} p. 440-441.

We draw the renormalized histogram of $10^6$ samples of $X_T$ and compare it to the explicit density of $X_T$ (Figure \ref{fig1} for the outgoing case $\theta_0=1$ and Figure \ref{fig2} for the incoming case $\theta_0=-1$).

In the non-symmetric case we can still use our limit algorithm but the density of $X_T$ becomes less explicit (see formula (6.5.12)
in \cite{kara}). Thus we will use as a benchmark the renormalized histogram of $10^6$ samples of  $X_T^\Delta$, where $(X^\Delta)$ denotes an Euler Scheme with time step $\Delta=T.10^{-5}$. We chose $\theta_0=2$, $\theta_1=-1$, $T=1$ and $x=0.0$.
We plot the corresponding renormalized histograms on Figure \ref{fig3}.

In Table \ref{tab1} we report the CPU times needed to get the $10^6$ samples, with the exact limit algorithm and the Euler scheme.
Programs were written in C-language and executed on a personal computer equipped with an Intel Core 2 duo processor, running at $2.23$ Ghz.
We report in Table \ref{tabrej1} the acceptance ratios.

On this example the acceptance ratios are good and the exact method is nearly four times faster than the Euler scheme with time step 
$\Delta=T.10^{-5}$.

\begin{table}
 \begin{center}
\begin{tabular}{ccc}
\hline
 &Exact & Euler   \\
\hline
CPU times & 2111s& 9521s\\
\hline
\end{tabular}
\caption{CPU times for $10^6$ simulations of a Brownian motion with two-valued drift with $\theta_0=2$ and $\theta_1=-1$  ($x=0.0$ and $T=1$).}
\label{tab1} 
\end{center}
\end{table}

\begin{table}
 \begin{center}
\begin{tabular}{cccc}
\hline
 & Exact Algorithm  &  Bridges  \\
 \hline
Acceptance Ratio&  20.4\%    &   58,6\% \\
\hline
\end{tabular}
\caption{Acceptance ratios for the case of a Brownian motion with two-valued drift with $\theta_0=2$ and $\theta_1=-1$  ($x=0.0$ and $T=1$).}
\label{tabrej1} 
\end{center}
\end{table}

\begin{figure}
\begin{center}
\epsfig{file=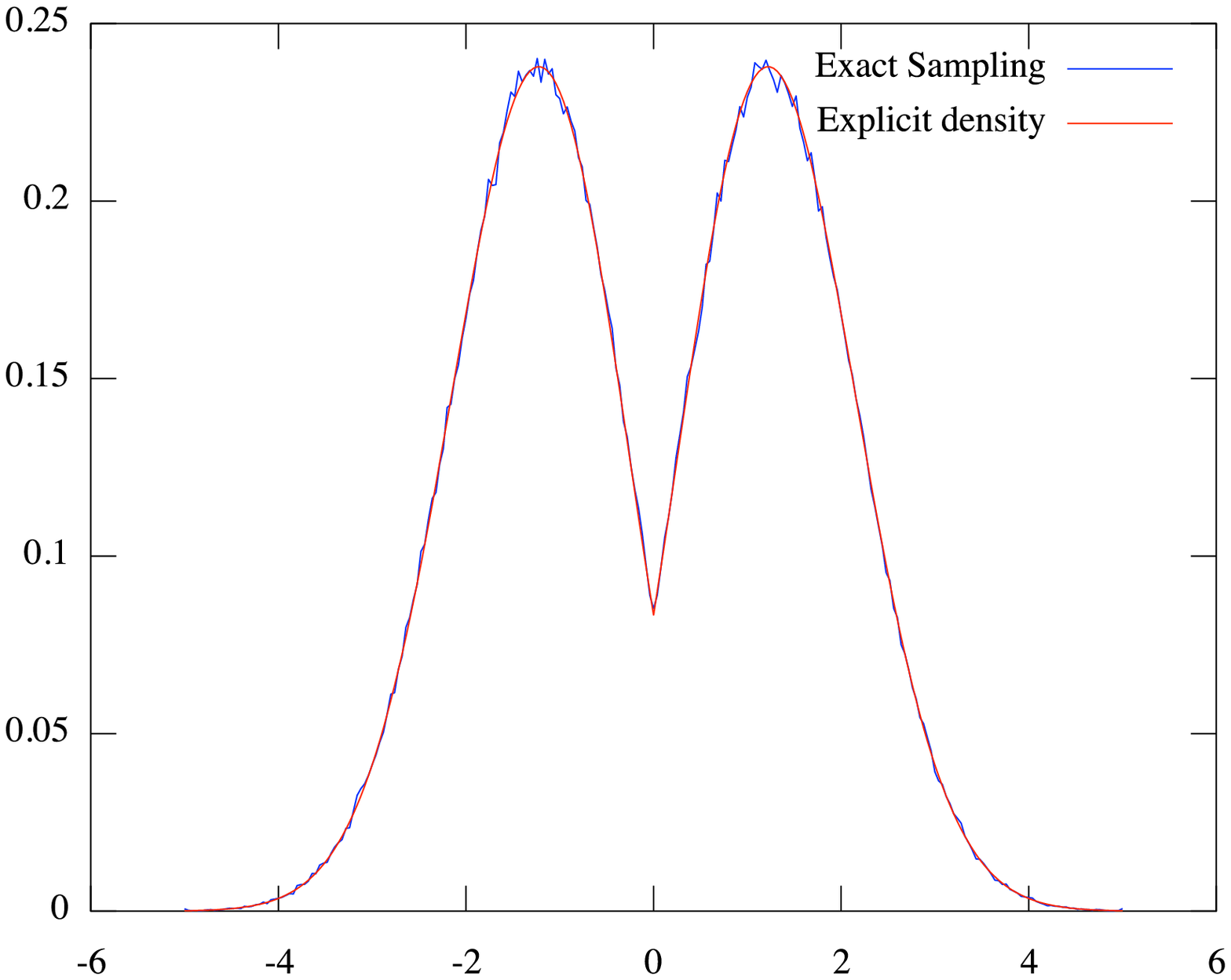,width=12cm}
\caption{  Brownian motion with two-valued drift, case $\theta_0=-\theta_1=1$ ($T=1$).  }
\label{fig1}
\end{center}
\end{figure}

\begin{figure}
\begin{center}
\epsfig{file=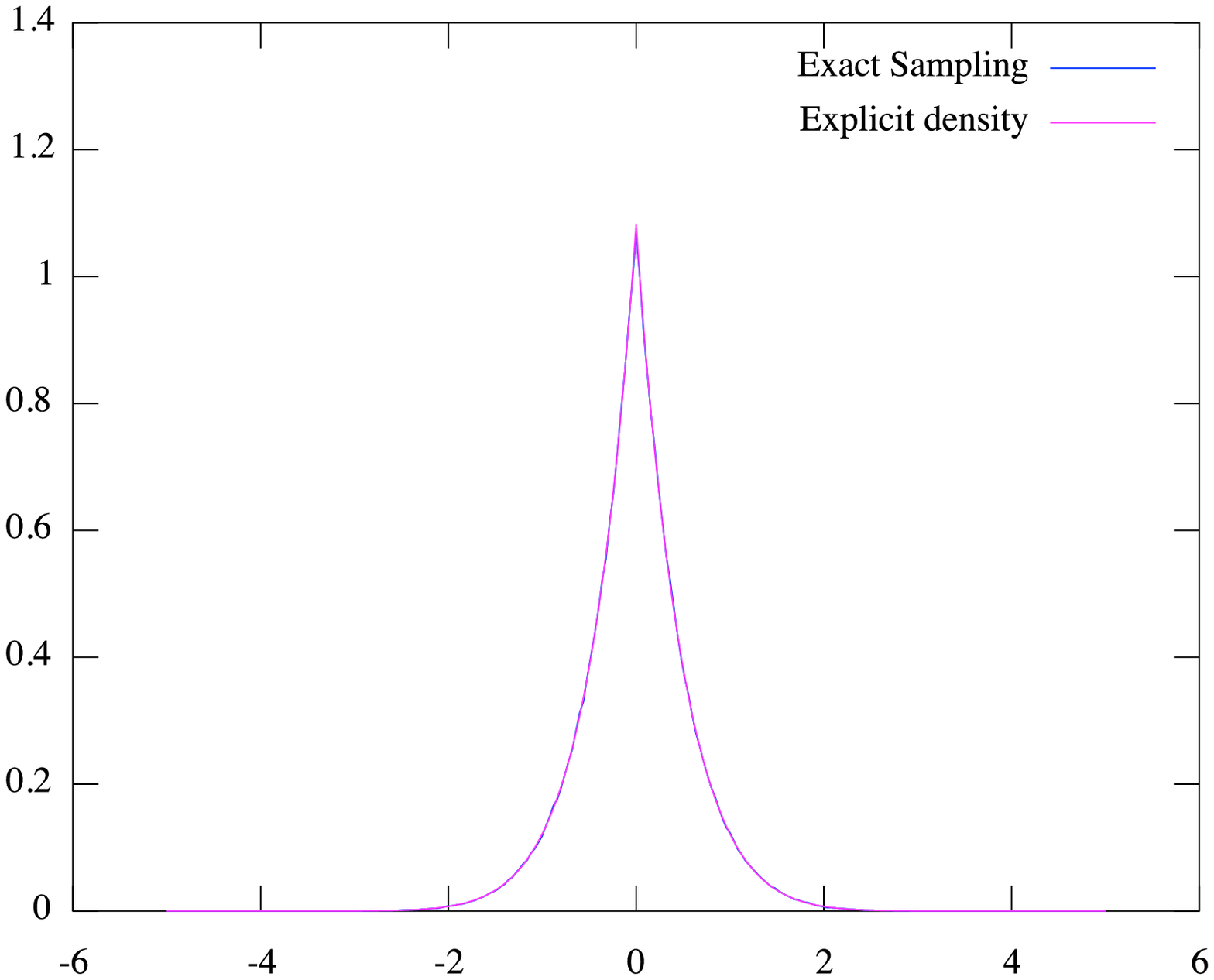,width=12cm}
\caption{  Brownian motion with two-valued drift, case $\theta_0=-\theta_1=-1$ ($T=1$).  }
\label{fig2}
\end{center}
\end{figure}

\begin{figure}
\begin{center}
\epsfig{file=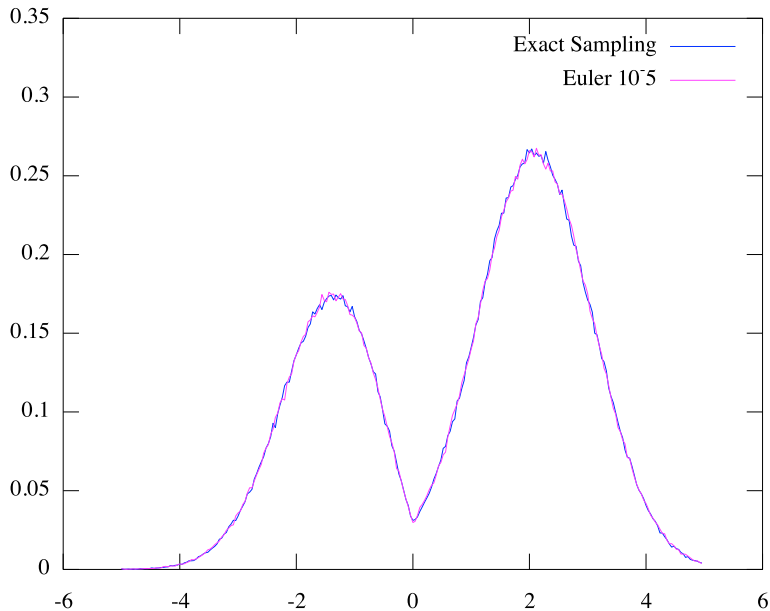,width=12cm}
\caption{ Limit algorithm v.s. Euler Scheme for  Brownian motion with two-valued drift with $\theta_0=2$ and $\theta_1=-1$  ($x=0.0$ and $T=1$).}
\label{fig3}
\end{center}
\end{figure}

\subsection{Exact simulation of an SDE with a discontinuous drift coefficient}

We consider now the SDE \eqref{eds-intro}
with
\begin{equation}
\label{example-barb}
\bar{b}(x)=\left\{
\begin{array}{lll}
-\frac{\pi}{2}\cos\big( \frac{\pi}{5}x \big)&\text{if}&x\geq 0\\
\\
\frac{3\pi}{2}-\frac{\pi}{2}\cos\big( \frac{\pi}{5}x \big)&\text{if}&x< 0.\\
\end{array}
\right.
\end{equation} 
Let $0<T<\infty$. We wish to sample along $X_T$.

We have $\theta=-3\pi/4$ and
$$
\tilde{\phi}(x)=\frac{\bar{b}^2(x)+\bar{b}'(x)}{2}+\frac{\pi^2}{20}.$$
We take $K=2\pi^2+\frac{\pi^2}{10}$ as an upper bound for $\tilde{\phi}$.
This allows to use the limit Algorithm.

Figure \ref{fig3} shows a comparison between a renormalized histogram of $10^6$ samples of $X_T$ obtained with the exact limit algorithm, and a renormalized histogram of $10^6$ samples of  $X_T^\Delta$, where $(X^\Delta)$ denotes an Euler Scheme with time step $\Delta$. We chose $x=0.0$, $T=1$ and time-steps $\Delta=T.10^{-2}$ and $\Delta=T.10^{-5}$.

\begin{figure}
\begin{center}
\epsfig{file=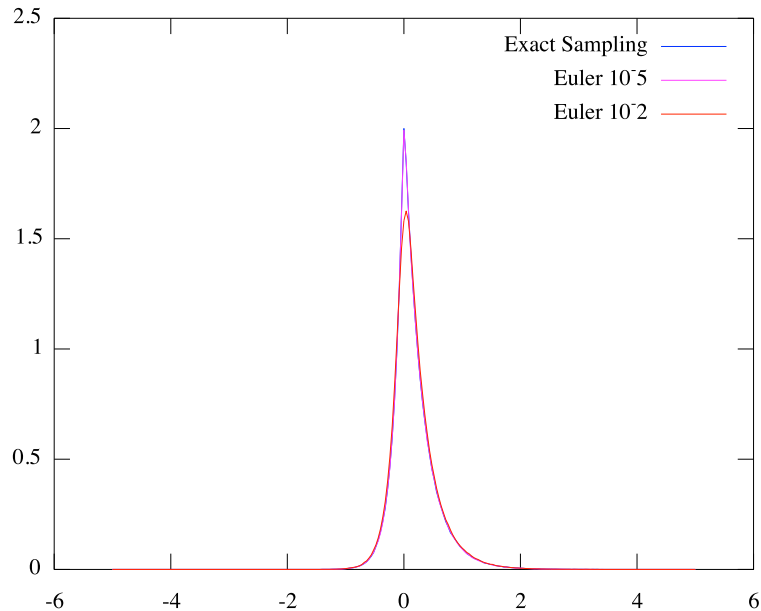,width=12cm}
\caption{ Limit algorithm v.s. Euler Scheme for the case where $\bar{b}$ is given by \eqref{example-barb} ($x=0.0$ and $T=1$).}
\label{fig4}
\end{center}
\end{figure}

In Table \ref{tab2} we report the CPU times needed to get the $10^6$ samples, with the exact limit algorithm and the Euler scheme (and, for the later one, with the different time steps we have used).
We report in Table \ref{tabrej2} the acceptance ratios.

On this example the exact simulation is competitive, compared to schemes with very fine grids.

\begin{table}
 \begin{center}
\begin{tabular}{ccc}
\hline
 &Exact & Euler   \\
&& ($\Delta t=10^{-n}$, $n=2,5$)\\
\hline
CPU times &11813s& 20s\\
&  &12952s\\
\hline
\end{tabular}
\caption{CPU times for $10^6$ simulations of $X_T$ for the case where $\bar{b}$ is given by \eqref{example-barb} ($x=0.0$ and $T=1$).}
\label{tab2} 
\end{center}
\end{table}

\begin{table}
 \begin{center}
\begin{tabular}{cccc}
\hline
 & Exact Algorithm  &  Bridges  \\
 \hline
Acceptance Ratio&  3.6\%    &   50,7\% \\
\hline
\end{tabular}
\caption{Acceptance ratios for the case where $\bar{b}$ is given by \eqref{example-barb} ($x=0.0$ and $T=1$).}
\label{tabrej2} 
\end{center}
\end{table}

\newpage

\section{Appendix}

\begin{proof}[Proof of Theorem \ref{thm-conv}]
We use the notations of \cite{legall}. Using the Occupation times formula we can rewrite Equation \eqref{edsn} as
$$
dX^n_t=dW_t+\int_\R\nu_n(dy)\,dL^y_t(X^n),\quad X^n_0=x,
$$
with $\nu_n(dy)=\bar{b}(y)dy+\frac 1 n\delta_0(dy)$. Lemma 2.1 in \cite{legall} asserts that there is for each $n\in\Nat$ a function
$f_{\nu_n}$, unique up to a multiplicative constant, satisfying $f_{\nu_n}'(dy)+(f_{\nu_n}(y)+f_{\nu_n}(y-))\nu_n(dy)$, where the notation $f_{\nu_n}'(dy)$ is for the derivative of $f_{\nu_n}$ in the generalized sense. Lemma 2.1 in \cite{legall} also asserts that if we require that 
$f_{\nu_n}(x)\to 1$ as $x\to -\infty$ then,
$$
f_{\nu_n}(y)=\exp\big( -2\int_{-\infty}^y\bar{b}(z)dz \big)\times\1_{y\geq 0}\frac{1- 1/n}{1+ 1/n}.$$
The sequence of functions $(f_{\nu_n})_n$ clearly converges point-wise to $f(y)=\exp\big( -2\int_{-\infty}^y\bar{b}(z)dz \big)$. By dominated convergence we have for all $K>0$ that $\int_{-K}^K|f_{\nu_n}-f|(y)dy\to 0$ as $n\to\infty$.
Thus Theorem 3.1 in \cite{legall} asserts that 
$$
\Esp\big[ \sup_{0\leq s\leq t}|X_s-X^n_s|\, \big]\xrightarrow[n\to\infty]{} 0,$$
with $X$ the solution of
$$
dX_t=dW_t+\int_\R\nu(dy)\,dL^y_t(X),\quad X_0=x,$$
where $\nu(dy)=-\frac{f'(dy)}{f(y)+f(y-)}=-\frac{1}{2}\frac{f'(y)}{f(y)}dy=\bar{b}(y)dy$. That is to say $X$ is the solution of \eqref{eds}.

\end{proof}

\bibliographystyle{plain}
\bibliography{TVD.bib}

\end{document}